\def\namedlabel#1#2{\begingroup
    #2%
    \def\@currentlabel{#2}%
    \phantomsection\label{#1}\endgroup
}
\title{Moving lemmas and the homotopy coniveau tower}
\author{Fr\'ed\'eric D\'eglise, Niels Feld, and Fangzhou Jin}
\date{\number\day-\number\month-\number\year}
\newtheorem{thm}[subsubsection]{Theorem}
\newtheorem{lm}[subsubsection]{Lemma}
\newtheorem*{conj}{Conjecture}
\theoremstyle{remark}
\newtheorem{rem}[subsubsection]{Remark}
\theoremstyle{definition}
\newtheorem{df}[subsubsection]{Definition}
\newtheorem{paragr}[subsubsection]{}
\numberwithin{equation}{subsubsection}
\newcommand{\cC}{\mathcal C}
\newcommand{\cH}{\mathcal H}
\newcommand{\cW}{\mathcal W}
\newcommand{\cU}{\mathcal U}
\newcommand{\cZ}{\mathcal Z}
\newcommand{\cN}{\mathcal N}
\newcommand{\SH}{\mathrm{SH}}
\newcommand{\DM}{\mathrm{DM}}
\DeclareMathOperator{\spz}{sp}
\newcommand{\plim} { \varprojlim }
\newcommand{\NN} {\mathbf N}
\newcommand{\ZZ} {\mathbf Z}
\newcommand{\Span} {\mathbf{Span}}
\renewcommand{\AA} {\mathbf A}
\newcommand{\GG} {\mathbf{G}_m}
\newcommand{\E}{\mathbb E}
\DeclareMathOperator{\Sh}{Sh}
\newcommand{\Sm}{\mathit{Sm}}
\newcommand{\Smsm}{\mathit{Sm}^{sm}}
\newcommand{\Smsp}{\mathit{Sm}^{sp}}
\newcommand{\Smspr}{\mathit{Sm}^{spr}}
\DeclareMathOperator{\Th}{Th} 
\newcommand{\codim}{\operatorname{codim}}
\newcommand{\Id}{\operatorname{Id}}
\newcommand{\Gm}{\mathbb{G}_m}
\DeclareFontFamily{U}{cbgreek}{}
\DeclareFontShape{U}{cbgreek}{m}{n}{
	<-6>    grmn0500
	<6-7>   grmn0600
	<7-8>   grmn0700
	<8-9>   grmn0800
	<9-10>  grmn0900
	<10-12> grmn1000
	<12-17> grmn1200
	<17->   grmn1728
}{}
\DeclareFontShape{U}{cbgreek}{bx}{n}{
	<-6>    grxn0500
	<6-7>   grxn0600
	<7-8>   grxn0700
	<8-9>   grxn0800
	<9-10>  grxn0900
	<10-12> grxn1000
	<12-17> grxn1200
	<17->   grxn1728
}{}
\newcommand{\normalorbold}{%
	\ifnum\pdf@strcmp{\math@version}{bold}=\z@ bx\else m\fi
}
\begin{document}

\begin{abstract}
In this note we study the functoriality of the coniveau filtration in motivic homotopy theory via a moving lemma over a base scheme, 
 extending previous works of Levine and Bachmann-Yakerson.  
The main result is that the motivic stable homotopy category can be modeled on a smaller site,
the \emph{smooth-smooth site}.
 The proof is based on a new approach to the purity theorem of Morel-Voevodsky using
specialization maps, which turns out to hold even in absence of the 
$\AA^1$-homotopy invariance property.
 Applications to the homotopy coniveau tower and to higher Chow-Witt groups are given.
\end{abstract}

\maketitle

\tableofcontents

\section{Introduction}

\subsection{Gysin morphisms in motivic homotopy theory}

In the late 1990s, Voevodsky initiated a unification of algebraic and topological methods. Combining algebraic geometry and homotopy theory, Morel and Voevodsky developed what is now called motivic homotopy theory, the main idea of which was to apply the techniques from classical algebraic topology to the study of schemes, replacing topological deformation parameterized by $[0,1]$ with algebraic deformation parameterized by the affine line $\AA^1$.\footnote{Beware that, as in topology, there are ``strong'' (algebraic) $\AA^1$-homotopies which corresponds to the naive idea, but also ``weak'' $\AA^1$-homotopies which mix in a non trivial way the strong ones with the Nisnevich topology. The latter explains the subtlety of motivic homotopy.}
The idea of motivic homotopy came into the scene first as a background of Voevodsky's theory of mixed motives,
 then as a critical tool in his proof of the Milnor and Bloch-Kato conjectures. It led to many new concepts among which the motivic Steenrod algebra, the algebraic cobordism and the category of (mixed) motivic complexes which is now encompassed into the more general motivic stable homotopy category.


\par 

One of the main fundamental result of motivic homotopy is the following theorem of Morel and Voevodsky \cite[Theorem 2.23, Section 3]{MorelVoevodsky}:
\begin{thm}[Morel-Voevodsky purity isomorphism]
\label{thm:MV3223}
	Let $S$ be a noetherian scheme of finite dimension. For every closed immersion $Z\to X$ of smooth schemes over $S$, there exists a natural isomorphism
	$$
	X/(X-Z)
	\simeq 
	\Th(N_ZX)
	$$
in the motivic pointed unstable homotopy category $\mathscr{H}_S$ where $\Th(N_ZX) = N_ZX / (N_ZX - Z)$ is the Thom space of the normal bundle $N_ZX$ of $Z$ in $X$.
\end{thm}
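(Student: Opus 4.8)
The plan is to derive the isomorphism from \emph{deformation to the normal cone}, reducing it to an explicit computation for a linear immersion; the only further inputs are Nisnevich descent/excision and $\AA^1$-homotopy invariance.

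\textbf{Step 1 (deformation space).} I would first form
$$
D_ZX\;=\;\mathrm{Bl}_{Z\times\{0\}}\!\bigl(X\times\AA^1\bigr)\;\setminus\;\mathrm{Bl}_{Z}\!\bigl(X\times\{0\}\bigr),
$$
which is smooth over $S$ --- the closed immersion $Z\hookrightarrow X$ of smooth $S$-schemes is regular, so the blown-up center is regularly embedded --- and carries a morphism $p\colon D_ZX\to\AA^1$ with $p^{-1}(\GG)\cong X\times\GG$ and $p^{-1}(0)\cong N_ZX$, together with a closed immersion $W:=Z\times\AA^1\hookrightarrow D_ZX$ over $\AA^1$ restricting to $Z\hookrightarrow X$ over $1$ and to the zero section $Z\hookrightarrow N_ZX$ over $0$. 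Passing to quotients along the two fiber inclusions produces a zig-zag
$$
X/(X-Z)\;\xrightarrow{\ i_1\ }\;D_ZX/(D_ZX-W)\;\xleftarrow{\ i_0\ }\;\Th(N_ZX)
$$
that is manifestly natural in the pair $(Z,X)$, so it suffices to show that $i_0$ and $i_1$ are isomorphisms in $\mathscr{H}_S$.

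\textbf{Step 2 (reduction to the linear model).} All objects and maps above are built functorially from representable presheaves, so whether $i_0$ and $i_1$ are equivalences may be tested locally on $X$ around $Z$, using freely that $Y\mapsto Y/(Y-T)$ is a Nisnevich sheaf depending only on a Nisnevich neighborhood of the closed subset $T$. Locally on $X$ the structure theory of smooth morphisms supplies an \'etale map $\phi\colon U\to\AA^n_S$ with $Z\cap U=\phi^{-1}(\AA^{n-c}_S)$, where $c=\codim_X Z$; base-changing $\phi$ along the \'etale map $Z\cap U\to\AA^{n-c}_S$ to split off a copy of $Z\cap U$, and then applying Nisnevich excision twice, identifies $U/(U-Z\cap U)$ with $\Th(N_{Z\cap U}U)$ and, crucially, reduces the assertion that $i_0$ and $i_1$ are equivalences to the case of the \emph{linear} immersion $\AA^{m-c}_S\hookrightarrow\AA^m_S$. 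For that immersion one has $D_{\AA^{m-c}_S}\AA^m_S\cong\AA^m_S\times\AA^1$ with $W=\AA^{m-c}_S\times\AA^1$, so that, writing $Q$ for the Thom space of the trivial rank-$c$ bundle, $i_0$ and $i_1$ become the canonical maps $Q\to Q\wedge\AA^1_+$; these are $\AA^1$-weak equivalences since $\AA^1$ is $\AA^1$-contractible. This settles the linear case, and hence the general one.

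\textbf{Main obstacle.} I expect the difficulty to lie in making Step 2 both rigorous and \emph{compatible}: the quotient functors must be controlled simultaneously under localization on $X$ and under \'etale base change along $\phi$, so that the excision manipulation is legitimate, and the locally produced equivalences must assemble into equivalences of the globally defined natural maps $i_0$ and $i_1$, which forces one to invoke Nisnevich descent together with independence of the chosen chart $\phi$. This is also the precise point at which $\AA^1$-invariance intervenes essentially --- through $Q\wedge\AA^1_+\simeq Q$ and through the role of the structure theorem --- so any proof that does not assume homotopy invariance must replace this mechanism, presumably by the specialization maps announced in the introduction.
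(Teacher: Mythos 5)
Your sketch is, in outline, the original Morel--Voevodsky argument, and that is also how the paper treats Theorem \ref{thm:MV3223}: the statement is quoted from \cite{MorelVoevodsky} and not reproved here, and the deformation space $D_ZX$, the zig-zag $X/(X-Z)\to D_ZX/(D_ZX-Z\times\AA^1)\leftarrow\Th(N_ZX)$, the Nisnevich-local reduction through an \'etale chart $\phi\colon U\to\AA^n_S$, and the final appeal to $\AA^1$-contractibility are exactly the ingredients of the cited proof. Two caveats: the excision manipulation does not literally land on the linear immersion $\AA^{m-c}_S\subset\AA^m_S$ but on the zero section $Z'\hookrightarrow Z'\times\AA^c_S$ of a trivial bundle over a smooth $Z'$ (which is just as easy, since there $D\cong Z'\times\AA^c_S\times\AA^1$ and $i_0,i_1$ are the evident $\AA^1$-homotopy equivalences), and, as you yourself flag, the substantive work is making the local identifications compatible with the globally defined $i_0,i_1$; this is where the ``V-good pairs'' bookkeeping of the original proof lives. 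Where your route genuinely differs is from the paper's own purity argument, namely Theorem \ref{thm_general_purity}: there no $\AA^1$-invariance is used at all. Instead, since $X$ and $N_ZX$ have $\mathcal{O}_S$-linearly isomorphic completed local rings at every point of $Z$, Artin approximation (Theorem \ref{thm:aaprox}) produces common Nisnevich neighborhoods of $Z$ in $X$ and in $N_ZX$; these are assembled (Lemma \ref{lem_non_empty_cofiltered_cat}) into a non-empty cofiltered category of Zariski hypercovers, and taking the limit of $\cW_\bullet/(\cW_\bullet-\cZ_\bullet)$ yields $X/(X-Z)\simeq\Th(N_ZX)$ already as (pro-)objects of Nisnevich sheaves on the smooth-smooth site $\Smsm_S$, before any localization. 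Your approach buys the classical statement in $\mathscr{H}_S$ with naturality in all of $\Sm_S$; the paper's approach buys independence from homotopy invariance (the step $Q\wedge\AA^1_+\simeq Q$ that you single out is precisely what it must avoid), at the cost of working over $\Smsm_S$ with naturality only along smooth morphisms, which is then repaired via the specialization maps and the span formalism.
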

Theorem~\ref{thm:MV3223} can be seen as an algebraic analogue of the theory of tubular neighborhoods of immersions of closed smooth manifolds. Likewise, it led to many interesting consequences, such as the existence of the Gysin sequence for vector bundles, or that of Gysin morphisms.

In this paper, we consider the \emph{smooth-smooth site} $\Smsm_S$, that is, the Nisnevich site on the category consisting of smooth $S$-schemes with smooth morphisms and generalize the previous theorem:
\begin{thm}[General purity theorem]
	\label{thm_main_purity}
	(see \Cref{thm_general_purity})
	Let $S$ be an excellent noetherian scheme.
 	Let $i:Z \to X$ be a closed immersion of smooth schemes over $S$. There exists a weak equivalence 
	$$
	{\mathfrak{p}_{X,Z}} :	X/(X-Z) \simeq  \Th(N_ZX)
	$$
	in the $\infty$-category of Nisnevich sheaves over the smooth-smooth site $\Smsm_S$.
\end{thm}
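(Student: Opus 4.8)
The plan is to produce $\mathfrak{p}_{X,Z}$ as a \emph{specialization map} attached to the deformation to the normal cone, and to reduce the statement that it is a weak equivalence of Nisnevich sheaves to a local one in which the classical appeal to $\AA^1$-invariance is replaced by a moving argument.

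First I would introduce the deformation space $D:=D_ZX$, obtained from $\mathrm{Bl}_{Z\times\{0\}}(X\times\AA^1)$ by deleting the strict transform of $X\times\{0\}$. It is smooth over $S$, carries a flat map $\pi\colon D\to\AA^1$ with $\pi^{-1}(1)\cong X$ and $\pi^{-1}(0)\cong N_ZX$, and contains $W:=Z\times\AA^1$ as a smooth closed subscheme inducing $Z$ on each of these two fibres. Since $D-W\hookrightarrow D$ is an open immersion, hence a morphism of $\Smsm_S$, the pointed Nisnevich sheaf $D/(D-W)$ is defined on the smooth-smooth site, and I would apply the specialization-map formalism of the paper to $(D,\pi,W)$ to obtain the natural comparison $\mathfrak{p}_{X,Z}\colon X/(X-Z)\to\Th(N_ZX)$. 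The point to stress is that, unlike in Morel--Voevodsky, this construction takes place already in Nisnevich sheaves: $\AA^1$ is not inverted. One should also check that after $\AA^1$-localization $\mathfrak{p}_{X,Z}$ recovers the isomorphism of \Cref{thm:MV3223}, so that the result is a genuine refinement.

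To prove that $\mathfrak{p}_{X,Z}$ is a weak equivalence I would first note that this is Nisnevich-local in $X$: if $\{X_i\to X\}$ is a Nisnevich cover and $Z_i:=Z\times_X X_i$, then Nisnevich descent identifies $X/(X-Z)$, resp. $\Th(N_ZX)$, with the geometric realization of the \v{C}ech diagram formed from the $X_i/(X_i-Z_i)$, resp. the $\Th(N_{Z_i}X_i)$, compatibly with $\mathfrak{p}$; and excision --- which holds for Nisnevich sheaves with no $\AA^1$-hypothesis --- does the same for étale neighbourhoods of $Z$. Invoking the standard local structure of a regular closed immersion of smooth $S$-schemes, namely an étale chart $X\to\AA^d_S$ carrying $Z$ to a linear subspace $\AA^{d-c}_S$, I would thereby reduce to the case in which $X$ is étale over $\AA^d_S$, $Z$ is the preimage of $\AA^{d-c}_S$, and $N_ZX$ is the trivial bundle $\AA^c_Z$. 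The compatibility of the specialization maps with smooth base change, and with composition of closed immersions --- which matches the successive excisions and reduces a general normal bundle to the trivial one --- is what makes this reduction legitimate.

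The heart of the argument, and the main obstacle, is what is left after this reduction. Replacing such an $X$ by the product $Z\times\AA^c$ is precisely the step where Morel and Voevodsky contract the $\AA^c$-directions by an $\AA^1$-homotopy, and it is why their isomorphism lives only in $\mathscr{H}_S$. Instead I would argue on Nisnevich stalks, over which $\pi\colon D\to\AA^1$ becomes a family over a henselian base, and use the moving lemma of the paper to deform a section of $X/(X-Z)$ --- regarded inside $D$ over $\pi^{-1}(1)$ --- to one that is flat over a neighbourhood of $0$ and whose fibre over $0$ is the sought-after section of $\Th(N_ZX)$; running the same procedure with $0$ and $1$ exchanged yields a two-sided inverse to $\mathfrak{p}_{X,Z}$ stalkwise. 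I expect most of the work to go into showing that these stalkwise constructions are canonical and coherent enough to glue to a morphism of Nisnevich sheaves and that they are genuinely inverse to $\mathfrak{p}_{X,Z}$; this is where the moving lemma, its behaviour under restriction along smooth maps, and the functoriality of the specialization formalism must all be brought to bear together.
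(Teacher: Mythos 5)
Your reduction steps (Nisnevich locality, excision, \'etale charts trivializing the normal bundle) are unobjectionable, but the ``heart of the argument'' you leave for last is exactly where the proposal breaks down, and in two ways. First, it is circular with respect to the paper's logic: the moving lemma (Theorem \ref{thm_main_moving}) is \emph{deduced} from the purity equivalence, and the specialization formalism you propose to apply to the deformation space $(D,\pi,W)$ is likewise only defined (Definition \ref{def_specialization}) once $\mathfrak{p}_{X,Z}$ is already available, so neither can be an input to the proof of purity. Second, even setting that aside, the deformation to the normal cone only yields the zig-zag $X/(X-Z)\to D/(D-W)\leftarrow \Th(N_ZX)$, whose two arms are equivalences precisely \emph{after} $\AA^1$-localization; at the level of Nisnevich sheaves the fibres of $\pi$ over $1$ and $0$ are not comparable, and your stalkwise plan to ``deform a section to one flat near $0$'' names no mechanism that could replace the $\AA^1$-contraction it is meant to avoid. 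A symptom of the missing idea is that your argument never uses the excellence hypothesis on $S$, which is essential in the statement.

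The paper's route is entirely different and does not pass through the deformation space. Working under Hypothesis \ref{Hyp1_purity} (so $Z=V(t)$ is a smooth principal divisor, giving $\widehat{X}_Z\simeq\widehat{\AA^1_Z}\simeq\widehat{N_ZX}$), it uses Artin approximation over excellent schemes (Theorem \ref{thm:aaprox}, via Popescu) to produce, Zariski-hyperlocally, \emph{common Nisnevich neighbourhoods} of $Z$ in $X$ and in $N_ZX$: this is the content of Lemma \ref{lem_non_empty_cofiltered_cat} and of the cofiltered category $\mathscr{I}(X,Z)$. The pro-object $\wp(X,Z)=\plim \cW_\bullet/(\cW_\bullet-\cZ_\bullet)$ then maps by Nisnevich excision (valid with no $\AA^1$-hypothesis) and Zariski hyperdescent isomorphically to both $X/(X-Z)$ and $\Th(N_ZX)$ in the pro-category of simplicial Nisnevich sheaves on $\Smsm_S$, and full faithfulness of the constant pro-object functor turns this into the weak equivalence $\mathfrak{p}_{X,Z}$ of Theorem \ref{thm_general_purity}. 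So the comparison between $X$ and $N_ZX$ is obtained by showing they are Nisnevich-locally the same pair along $Z$, not by transporting sections along the deformation parameter; to repair your proposal you would need to replace your stalkwise moving step by this approximation argument (or some equivalent use of the excellence hypothesis).
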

We emphasize the fact that Theorem~\ref{thm_main_purity} holds generally without assuming $\AA^1$-homotopy invariance. This result is used to prove our main result:
\begin{thm}[General motivic moving lemma]
	\label{thm_main_moving}
For any excellent noetherian scheme $S$, we let $\nu:\Smsm_S \rightarrow \Sm_S$ be the canonical inclusion of Nisnevich site. The restriction functor:
$$
\nu^*:\SH(\Sm_S) \rightarrow \SH(\Smsm_S)
$$
is an equivalence of monoidal $\infty$-categories.
\end{thm}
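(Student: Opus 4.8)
The plan is to separate a formal part --- full faithfulness, together with the passage to the stable and monoidal statement --- which makes no use of \Cref{thm_main_purity}, from the geometric heart, essential surjectivity, into which purity enters through the graph-and-deformation trick. It is convenient to work first at the level of pointed motivic spaces $\mathcal H_\bullet(\Sm_S)=L_{\AA^1}L_{\nis}\PSh_\bullet(\Sm_S)$ and $\mathcal H_\bullet(\Smsm_S)$: once $\nu^*\colon\mathcal H_\bullet(\Sm_S)\to\mathcal H_\bullet(\Smsm_S)$ is shown to be a symmetric monoidal equivalence, it carries the intrinsic $\PP^1$ to $\nu^*\PP^1$, which by essential surjectivity applied to $\PP^1$ is again $\PP^1$, so $\nu^*$ passes to an equivalence of the $\PP^1$-stabilizations, namely of $\SH$, and this stabilized functor is both monoidal and visibly the restriction functor of the statement. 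One first records three easy facts. Since $\nu$ is the identity on objects and carries Nisnevich covers to Nisnevich covers (Nisnevich covers consist of \'etale, hence smooth, morphisms), the restriction $\nu^*$ preserves Nisnevich-local and $\AA^1$-invariant objects. It is cocontinuous: the colimit-preserving composite $(L'_{\AA^1}L'_{\nis})\circ\nu^*_{\PSh}$ inverts the motivic equivalences of $\PSh_\bullet(\Sm_S)$ --- as $\nu^*_{\PSh}$ sends the generating ones, namely Nisnevich-local equivalences and the projections $X\times\AA^1\to X$, to motivic equivalences over $\Smsm_S$ --- hence factors cocontinuously through $\mathcal H_\bullet(\Sm_S)$, the factorization being $\nu^*$. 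And $\nu^*$ is lax symmetric monoidal, with left adjoint the strong symmetric monoidal motivic left Kan extension $\nu_!^{\mathrm{mot}}$, which carries $Y_+$ to $Y_+$. Granting that $\nu^*$ is an equivalence of underlying $\infty$-categories, the monoidal refinement follows formally, the inverse functor $\nu_!^{\mathrm{mot}}$ being strong symmetric monoidal. It thus remains to prove that the unit and counit of $(\nu_!^{\mathrm{mot}}\dashv\nu^*)$ are equivalences.

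That the counit $\nu_!^{\mathrm{mot}}\nu^*A\to A$ is an equivalence --- equivalently, that $\nu^*$ is fully faithful --- follows from the identity
\[
\Map_{\mathcal H_\bullet(\Sm_S)}(Y_+,A)\;\simeq\;A(Y)\;\simeq\;(\nu^*A)(Y)\;\simeq\;\Map_{\mathcal H_\bullet(\Smsm_S)}(Y_+,\nu^*A),
\]
natural in $Y\in\Sm_S$ and in motivic-local $A$, which holds because $\nu$ is the identity on objects. As the $Y_+$ generate $\mathcal H_\bullet(\Sm_S)$ under colimits and both functors $A'\mapsto\Map_{\mathcal H_\bullet(\Sm_S)}(A',B)$ and $A'\mapsto\Map_{\mathcal H_\bullet(\Smsm_S)}(\nu^*A',\nu^*B)$ carry colimits to limits (the second because $\nu^*$ is cocontinuous), the comparison $\Map(A,B)\to\Map(\nu^*A,\nu^*B)$ is an equivalence for all $A,B$. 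No use of purity is made in this step.

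It remains to prove that the unit $\Id\to\nu^*\nu_!^{\mathrm{mot}}$ is an equivalence; since both functors are cocontinuous, it suffices to check this on the generators $Y_+$, where it becomes the assertion that the canonical map $Y_+\to\nu^*\nu_!^{\mathrm{mot}}Y_+=\nu^*Y_+$ --- that is, the map of presheaves on $\Smsm_S$ from ``smooth morphisms into $Y$'' to ``all morphisms into $Y$'' --- is a motivic equivalence. This is the moving lemma, and it is here that \Cref{thm_main_purity} is used. Given $f\colon W\to Y$ in $\Sm_S$, one factors it through the graph, $f=\mathrm{pr}_Y\circ\Gamma_f$ with $\Gamma_f\colon W\hookrightarrow W\times_SY$ a regular closed immersion (regular because $W$ and $Y$ are $S$-smooth) and $\mathrm{pr}_Y$ smooth. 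Feeding $\Gamma_f$ into \Cref{thm_main_purity}, which provides $(W\times_SY)/((W\times_SY)-W)\simeq\Th(N_{\Gamma_f})$ in Nisnevich sheaves on $\Smsm_S$, and combining with the deformation to the normal cone $D_{\Gamma_f}(W\times_SY)\to\AA^1$ --- whose fibres over $1$ and $0$ are $W\times_SY$ and the vector bundle $N_{\Gamma_f}\to W$, and which carries a compatible section over $W\times\AA^1$ --- one linearizes $\Gamma_f$: motivically on $\Smsm_S$ it is identified with the zero section of $N_{\Gamma_f}$, a datum built entirely out of smooth morphisms once one $\AA^1$-contracts $N_{\Gamma_f}\to W$ and collapses onto $\Th(N_{\Gamma_f})$. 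The way I would organize this so as to obtain the requisite naturality is to use \Cref{thm_main_purity} to equip each motivic-local $G\in\mathcal H_\bullet(\Smsm_S)$ with Gysin pullbacks along regular closed immersions, hence with pullbacks $f^*:=\Gamma_f^*\circ\mathrm{pr}_Y^*$ along arbitrary $f$, and then to verify that these assemble $G$ into a motivic-local presheaf on $\Sm_S$ whose restriction along $\nu$ recovers $G$ --- a section of $\nu^*$, which gives essential surjectivity.

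The hardest step is precisely this last one: promoting the homotopy-level purity equivalences and the deformation-to-the-normal-cone identifications to a \emph{coherent} $\infty$-functor $\mathcal H_\bullet(\Smsm_S)\to\mathcal H_\bullet(\Sm_S)$ inverse to $\nu^*$. One must check --- at the level of coherent diagrams, not merely of homotopies --- that the Gysin pullbacks supplied by purity are compatible with composition of regular closed immersions, with smooth base change, and with the graph factorization, so that $f^*$ depends functorially on $f$ and not merely on $\Gamma_f$. This is exactly what the paper's moving-lemma machinery, built on the $\AA^1$-free proof of purity via specialization maps that underlies \Cref{thm_main_purity}, is designed to supply; granting it, essential surjectivity --- and with it the theorem --- follows.
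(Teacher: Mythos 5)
Your reduction to the unstable statement is the first and most serious problem. You propose to prove that $\nu^*\colon\mathcal{H}_\bullet(\Sm_S)\to\mathcal{H}_\bullet(\Smsm_S)$ is an equivalence and then stabilize; but the theorem is a statement about $\SH$, and the mechanism that produces pullbacks along non-smooth morphisms is irreducibly $\Gm$-stable. The specialization maps of \Cref{def_specialization}, which encode the Gysin/graph data, are only defined after smashing with $S^1\wedge\Gm$ (they are maps $\Th(\AA^1_Z)\to\Th(\AA^1_Z)\wedge(X-Z)_+$), and your own linearization sketch identifies $W$ with the Thom space $\Th(N_{\Gamma_f})$: to remove that Thom twist and obtain an unsuspended map $G(Y)\to G(W)$ one needs $G$ to be an infinite $\Gm$-loop space, exactly as the introduction stresses ("at the cost of working $\GG$-stably"). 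The unstable assertion underlying your plan --- that the unit from the presheaf on $\Smsm_S$ represented by $Y$ to $\nu^*$ of the presheaf of \emph{all} morphisms into $Y$ is a motivic equivalence of spaces --- is strictly stronger than \Cref{thm_main_moving}, is nowhere claimed in the paper, and is not accessible by the argument you outline. The proof has to be run directly on $\Gm$-spectra, as the paper does via the functor $\mathscr{M}\colon\Smsp_S\to\SH(\Smsm_S)$ and the rigidified extension $\tilde{\E}_*(X)=\lim_r\widehat{\E}(\AA^r_S\times_S X)$.

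Second, your claim that full faithfulness is formal ("no use of purity") is circular as written. The chain $\Map_{\mathcal{H}_\bullet(\Sm_S)}(Y_+,A)\simeq A(Y)\simeq(\nu^*A)(Y)\simeq\Map_{\mathcal{H}_\bullet(\Smsm_S)}(Y_+,\nu^*A)$ computes mapping spaces out of the \emph{representable of the smooth-smooth site}, i.e.\ it is just the adjunction $\nu_!\dashv\nu^*$; it does not compute $\Map(\nu^*Y_+,\nu^*A)$, which is the target of the comparison map when you take $A'=Y_+$ in your colimit-descent argument. The discrepancy between the two is precisely the unit map (smooth morphisms into $Y$ versus all morphisms into $Y$), i.e.\ the moving lemma itself, so the base case of your fullness argument is missing; once the unit is known to be an equivalence on representables one can indeed conclude formally (using cocontinuity and conservativity of $\nu^*$), but then all the content sits in that step. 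Finally, the step you "grant" --- the coherent construction of Gysin pullbacks compatible with composition, smooth base change and the graph factorization --- is not a technical remainder but the actual proof: it is what the Artin-approximation section, the pro-object purity equivalence $\mathfrak{p}_{X,Z}$ of \Cref{thm_general_purity}, the categories $\Smsp_S$ and $\Smspr_S$ with the relations (Rsp1)--(Rsp3), and the lci Gysin formalism imported from \cite{DegliseFeldJin22} are built to deliver. As it stands, both essential surjectivity and (by the above) full faithfulness remain unproven in your proposal.
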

In other words, we can reconstruct the whole functoriality of a Nisnevich sheaf by simply looking at the pullbacks along smooth morphisms, provided it satisfies the $\AA^1$-localizing property and it has the structure of an infinite $\Gm$-loop space. As a concrete example, one can consider the notion of mixed Weil theory\footnote{recall examples are given by de Rham, (geometric) \'etale and rigid cohomologies} from \cite{CD2}: it is sufficient to have such a functor on smooth affine schemes with only smooth morphisms to reconstruct the full cohomology theory, and in particular the pullback with respects to arbitrary morphisms
of smooth (and in fact even singular) $S$-schemes.

\subsection{Perverse homotopy heart and Milnor-Witt cycle modules}

\par 

The notion of \emph{Milnor-Witt cycle modules} is introduced by the second-named author in \cite{Feld1} over a perfect field and studied in \cite{Feld2, Fel21, Fel21b}. Milnor-Witt cycle modules generalize Rost's theory \cite{Rost96} and form a practical framework to develop \textit{quadratic} intersection theoretic questions \cite{Fel21c} and their applications in birational geometry \cite{Fel22a}.

\par 

After slight changes, the theory of Milnor-Witt cycle modules can be extended to regular base schemes (see \cite{BHP22}). In the case of a singular case scheme, the theory is more involved and has been dealt with in \cite{DegliseFeldJin22}.

\par Our interest in Milnor-Witt cycle module lies in the fact that they can be used to compute hearts of motivic homotopy categories as described below.

Voevodsky defines a $t$-structure on the category of motivic complexes $\DM(k)$ over a perfect field $k$, the \emph{homotopy $t$-structure}, whose heart is identified with the abelian category of homotopy invariant sheaves with transfers. In the thesis of the first-named author, this heart is further identified with Rost's category of cycle modules over $k$ (see \cite[Prop. 5.6]{Deg9}), which is a far-fetched generalization of the Gersten resolution of homotopy sheaf with transfers proved by Voevodsky. Although this $t$-structure fails to be a candidate of the conjectural \emph{motivic $t$-structure} whose heart gives an abelian category of mixed motives, Voevodsky expects a way to define the latter from the former.

The homotopy $t$-structure is also defined over $\SH(k)$ by Morel, which turns out to be an analogue of the \emph{Postnikov tower} in algebraic topology. 
These constructions are later extended to a base scheme by Ayoub in \cite{Ayoub1}: the so-called \emph{perverse homotopy $t$-structure} has the distinctive feature to be obtained by gluing in the sense of Beilinson-Bernstein-Deligne-Gabber. Ayoub makes the following conjecture on the heart of the perverse homotopy $t$-structure:
\begin{conj}
	Let $S$ be a noetherian excellent finite dimensional base scheme of characteristic $0$.
	Then the heart of the perverse homotopy t-structure on $\DM(S)$
	is equivalent to the category of Rost cycles modules over $S$.
\end{conj}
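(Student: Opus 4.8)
\emph{Proof strategy.} The plan is to bootstrap from the known case of a perfect field, using the homotopy coniveau tower together with the general purity theorem \Cref{thm_main_purity} and the moving lemma \Cref{thm_main_moving} as the essential new inputs. Over a perfect field $k$, the heart of the homotopy $t$-structure on $\DM(k)$ is the category of homotopy invariant Nisnevich sheaves with transfers, which by \cite[Prop.~5.6]{Deg9} is Rost's category of cycle modules over $k$: the equivalence sends a cycle module $M$ to the sheaf $X \mapsto H^0$ of the associated Gersten--Rost complex, and conversely reads off from a homotopy sheaf its values at fields together with the residue maps coming from the coniveau spectral sequence. The goal is to upgrade this dictionary to a general base $S$, the new feature being that $\DM(S)$ is glued, in Ayoub's sense, from the categories $\DM(\kappa(s))$ attached to the points $s$ of $S$-schemes.

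First I would transport the homotopy coniveau tower (in Levine's model) to the smooth-smooth site via \Cref{thm_main_moving} and its $\DM$-analogue, obtained by passing to module categories over the motivic cohomology spectrum. On $\Smsm_S$ the functoriality is carried by smooth morphisms alone, so the layers of the tower are computable locally; applying the general purity theorem layer by layer — here it is essential that \Cref{thm_main_purity} does not require $\AA^1$-invariance — one identifies the $q$-th graded piece, evaluated on a smooth $S$-scheme $X$, with a sum over the codimension-$q$ points $x$ of $X$ of Thom spectra over $\kappa(x)$. The differentials of the resulting coniveau complex are assembled from the boundary maps of localization triangles, i.e.\ from specialization maps, which is precisely the shape of a Rost complex.

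Next I would compare Ayoub's perverse homotopy $t$-structure with the $t$-structure associated to this tower. By the gluing description, an object $M$ of the heart restricts over each point $s$ to an object of the heart of the homotopy $t$-structure over $\kappa(s)$, hence — by the perfect-field case applied to $\kappa(s)^{\mathrm{perf}}$ together with a continuity argument — to a cycle module $M_s$, and the gluing data along specializations is exactly the system of specialization maps occurring in the coniveau complex above. The residue and multiplication-by-units parts of Rost's structure come directly from the localization triangles and the $\Gm$-loop structure; the \emph{transfer} (corestriction) maps are the delicate point, and here \Cref{thm_main_moving} is decisive: a transfer along a finite morphism is built from Gysin maps, which the moving lemma reconstructs from pullbacks along smooth morphisms, so the smooth functoriality already determines — and constrains — the transfers. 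I would first carry this out for regular $S$ (building on \cite{BHP22} for the cycle-module side), and then descend to arbitrary excellent $S$ of characteristic zero by cdh-descent and resolution of singularities. Conversely, from a Rost cycle module over $S$ as in \cite{DegliseFeldJin22} one forms the Gersten complex and shows, again via purity and the moving lemma, that its $H^0$ underlies an object of the heart; the two constructions are then mutually inverse, and monoidality follows from compatibility of the coniveau tower with $\otimes$.

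The main obstacle — and the source of the characteristic-zero hypothesis — is establishing that the perverse homotopy $t$-structure is non-degenerate and compatible with the coniveau filtration in the required generality, equivalently that every object of the heart is of Gersten type; this hinges on a convergence statement for the homotopy coniveau tower over $S$ that does not follow formally and seems to genuinely require resolution of singularities. The second hard point is the full verification of Rost's compatibility axiom relating residues with transfers and units: \Cref{thm_main_purity} and \Cref{thm_main_moving} reduce it to an assertion about smooth morphisms and specialization maps, which is the right form to attack it, but they do not by themselves close the argument. I expect these two points to be the genuine technical core of the conjecture.
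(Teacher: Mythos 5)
Your text is a strategy sketch rather than a proof, and you say so yourself: you leave open precisely the two points you identify as ``the genuine technical core'' (convergence/non-degeneracy of the tower, and the full verification of Rost's axioms relating residues, units and transfers). That is a genuine gap, not a presentational one. Note also that the present note does not prove this statement either: the conjecture is Ayoub's, and the proof is carried out in the companion paper \cite{DegliseFeldJin22}, whose architecture the introduction only summarizes. Comparing your plan with that architecture, two of your choices go in a different (and more fragile) direction. First, you propose to obtain non-degeneracy and the reduction to a general excellent base by cdh-descent plus resolution of singularities from the regular case; the actual argument instead works directly over possibly singular bases by means of a dimension function, with non-degeneracy of the perverse homotopy $t$-structure supplied by \cite{BD1} (valid even in equal characteristic $p$ with $\ZZ[1/p]$-coefficients and in mixed characteristic rationally), and with the cycle-module theory over such bases developed in \cite{Fel21} and \cite{DegliseFeldJin22} rather than descended from \cite{BHP22}. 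Second, your claim that ``the smooth functoriality already determines --- and constrains --- the transfers'' overstates what \Cref{thm_main_moving} delivers: the moving lemma's role in the actual proof is the specific technical step of extending the functoriality of the coniveau filtration (for a fixed dimension function) to all morphisms of smooth $S$-schemes, i.e.\ to local complete intersection morphisms that are not flat; building the transfers and checking the Rost/Milnor--Witt axioms from this enhanced functoriality is exactly the content of the cycle-module machinery you are implicitly invoking but not supplying. So the overall direction (coniveau filtration, purity without $\AA^1$-invariance, gluing over points, bootstrapping from the field case) is consistent with the intended proof, but as written the argument does not close, and the two places where it stops are the places where the real work lies.
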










In our paper \cite{DegliseFeldJin22} we prove this conjecture, and even improves it in several directions. A crucial step is given by the work \cite{BD1}, which extends Ayoub's construction to schemes endowed with a dimension function: the perverse homotopy t-structure is non-degenerate in equal characteristic $p$ with $\ZZ[1/p]$-coefficients, and in mixed characteristics with rational coefficients.
 Secondly, relying on the ground work on Milnor-Witt modules of \cite{Fel21},
 we can state and prove the conjecture when the category of motivic complexes $\DM$ is replaced by
 the stable motivic homotopy category $\SH$.

One of the main technical difficulties in the proof is to extend 
the functoriality of the coniveau filtration on objects of $\SH(S)$
 (associated to a given dimension function on $S$) to all morphisms between smooth $S$-schemes (which are always local complete intersection), 
 which is not obvious unless the morphism is flat. 
 Our main theorem \Cref{thm_main_moving} allows us to circumvent this problem and obtain the desired enhanced functoriality, as a direct corollary.

\subsection{The homotopy coniveau tower}

Let $B$ be a Noetherian separated scheme of finite Krull dimension. For a functor $E:
\Sm_B \to \textbf{Spt}$ from smooth $B$-schemes of finite type to spectra Levine constructs in \cite{Levine06} the homotopy coniveau tower
$$
\dots
\to
E^{(p+1)}(X,-)
\to 
E^{(p)}(X,-)
\to
\dots 
\to 
E^{(0)}(X,-).
$$

The $E^{(p)}(X,-)$ are simplicial spectra, whose $n$-simplices are limits of spectra with support $E^W(X\times \Delta_n)$, where $W$ runs through closed subsets of $X\times \Delta_n$ with the property that $\codim_{X×F}(W \cap (X\times F)) \geq p$ for all faces of the algebraic $n$-simplex $\Delta_n$.

Let $E^{(p/p+1)}(X,-)$ denote the homotopy cofiber of the map $E^{(p+1)}(X,-) \to E^{(p)}(X,-)$. An immediate consequence of the homotopy coniveau tower is the spectral sequence
$$
E^{p,q}_1 
:=
\pi_{-p - q}( E^{(p/p+1)}(X,-))
\Rightarrow
\pi_{-p-q}(E^{(0)}(X,-)),
$$
which in case of the K-theory spectrum yields the Bloch-Lichtenbaum spectral sequence from motivic cohomology to K-theory (see \cite{BlochLichtenbaum95}).

The main result of \cite{Levine06} yields the functoriality of the previous spectral sequence
 with respect to $\Sm_B$, where $B$ is a Dedekind scheme.
 In order to achieve this, Levine first constructs a functorial model of the presheaf of $S^1$-spectra 
$U
\mapsto
E^{(p)}(U,-)$
on the Nisnevich site $X_{Nis}$ of $X$, which is denoted by $E^{(p)}(X_{Nis},-)$. 
The construction uses a generalization of the classical method used to prove Chow’s moving lemma for cycles modulo rational equivalence. This yields functoriality for a similar spectral sequence with $E^{(p)}(X,-)$ replaced by a fibrant model of $E^{(p)}(X_{Nis},-)$. Functoriality of the spectral sequence then follows from the localization properties of $E^{(p)}(X,-)$, which were developed in \cite{Levine2001}.

\par Our main theorem \ref{thm_main_moving} can be used to replace Levine's moving lemma,
 at the cost of working $\GG$-stably.
 Moreover, we obtain a result valid over any excellent noetherian base scheme, provided one uses a (co)dimension
 function to define the coniveau filtration. As an example,
 this might be used in order to extend Bloch's theory of higher Chow groups over arbitrary base schemes.
 Note however that the localization property, even with this new approach, still remains an open problem.

\par The main constructions of \cite{Levine06} have been later on used in \cite{BachmannYakerson20} to study a motivic conservativity conjecture over a perfect base field. Particular cases of this conjecture have also been proved in \cite{Bachmann20} and improved in \cite{Fel21}. We hope to use our current results to study these problems in general.

\subsubsection*{Notations and conventions}

\begin{itemize}
\item
All schemes are assumed noetherian and finite dimensional.
\item
We fix an excellent base scheme $S$ and ring of coefficients $R$. If not stated otherwise, all schemes and morphisms of schemes are defined over $S$.
\item
For a scheme $T$, a \emph{point} (resp. \emph{trait}, \emph{singular trait}) of $T$ will be a morphism  of schemes essentially of finite type from the spectrum of 
a field (resp. discrete valuation ring, local ring of dimension $1$) to $T$. Morphisms of them are morphisms of $T$-schemes.
\end{itemize}

	\subsubsection*{Acknowledgments}
	
	The authors deeply thank Aravind Asok, Joseph Ayoub, Mikhail Bondarko, Baptiste Calmès, Jean Fasel, Marc Levine, Johannes Nagel, Paul Arne \O stv\ae r, Bertrand Toën for conversations, exchanges and ideas that led to the present paper.

	The work of the first and second-named authors are supported by the ANR HQDIAG project no ANR-21-CE40-0015. The second-named author is supported by the ANR LabEx CIMI within the French State Programme “Investissement d’Avenir”.
	The third-named author is supported by the National Key Research and Development Program of China Grant Nr. 2021-YFA1001400, the National Natural Science Foundation of China Grant Nr. 12101455 and the Fundamental Research Funds for the Central Universities.

\section{Artin approximation}

\begin{paragr}
	
Recall that if $X$ is a scheme, $Z\subset X$ is a subscheme and $x\in Z$, a \textbf{Nisnevich neighborhood of $x$ in $X$} is an \'etale morphism of pointed schemes $(W,w)\to (X,x)$ together with a section $(Z,x)\to (W,w)$. The \emph{Artin approximation property} implies the following result:

\end{paragr}

\begin{thm}[\textrm{\cite[Theorem 1.3]{PopescuNeron}, \cite[Corollary 2.6]{ArtinApprox}}]
	\label{thm:aaprox}
	Let $(S,s)$ be a pointed excellent noetherian scheme. Let $(X,x)$ and $(Y,y)$ be two $(S,s)$-schemes essentially of finite type. If there exists an $\mathcal{O}_S$-linear isomorphism between the completed local rings $\widehat{\mathcal{O}}_{X,x}\simeq\widehat{\mathcal{O}}_{Y,y}$, then the points $x$ and $y$ have a common Nisnevich neighborhood, that is, there exists a diagram of $S$-schemes 
	\begin{align}
		(X,x)\xleftarrow{f}(W,w)\xrightarrow{g}(Y,y)
	\end{align}
	making $(W,w)$ a Nisnevich neighborhood of both $x$ and $y$.
\end{thm}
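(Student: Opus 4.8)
The plan is to turn the statement into commutative algebra and feed it to the Popescu--N\'eron--Artin approximation theorem. Set $R_0=\mathcal{O}_{S,s}$, $A=\mathcal{O}_{X,x}$, $B=\mathcal{O}_{Y,y}$; these are local rings essentially of finite type over the excellent ring $R_0$, hence themselves excellent, and we are handed an $R_0$-algebra isomorphism $\phi\colon\widehat A\xrightarrow{\ \sim\ }\widehat B$ (in particular $\kappa(x)\simeq\kappa(y)$ compatibly with $\kappa(s)$). Choosing a presentation $B=(R_0[t_1,\dots,t_n]/I)_{\mathfrak q}$ with $I$ finitely generated, the restriction of $\phi^{-1}$ along $B\to\widehat B$ is a local $R_0$-homomorphism $B\to\widehat A$, equivalently a solution in the maximal ideal of $\widehat A$ of the finite system of equations cutting out $I$. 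Let $A^{h}=\mathcal{O}^{h}_{X,x}$ denote the henselization, an excellent henselian local ring with $\widehat{A^{h}}=\widehat A$, which is the filtered colimit of $\mathcal{O}_{W,w}$ over \'etale pointed neighborhoods $(W,w)\to(X,x)$.

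First I would invoke approximation: by Artin's approximation theorem for excellent henselian local rings --- available over the arbitrary excellent base $R_0$ thanks to Popescu's general N\'eron desingularization \cite[Theorem 1.3]{PopescuNeron}, and already in the geometric cases by \cite[Corollary 2.6]{ArtinApprox} --- for every integer $N$ there is a solution of the same system in $A^{h}$ congruent to the given one modulo $\widehat{\mathfrak m}_A^{N}$. Such a solution is a local $R_0$-homomorphism $\psi\colon B\to A^{h}$ with $\psi\equiv\phi^{-1}|_B\pmod{\widehat{\mathfrak m}_A^{N}}$. Because $B$ is essentially of finite type, $\psi$ factors through $C:=\mathcal{O}_{W,w}$ for some \'etale neighborhood $f\colon(W,w)\to(X,x)$, and since $f$ is \'etale we have $\widehat C=\widehat A$.

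The key step is to arrange that the induced map on completions $\widehat\psi\colon\widehat B\to\widehat C=\widehat A$ is an \emph{isomorphism}, not merely $\widehat{\mathfrak m}$-adically close to one. Here one considers $\eta:=\phi\circ\widehat\psi$, an $R_0$-endomorphism of $\widehat B$ congruent to the identity modulo $\widehat{\mathfrak m}_B^{N}$ (as $\phi$ carries $\widehat{\mathfrak m}_A$ onto $\widehat{\mathfrak m}_B$); applying the Artin--Rees lemma to a finite presentation of $\widehat B$ one shows, exactly as in the proof of \cite[Corollary 2.6]{ArtinApprox}, that there is an $N_0$ depending only on that presentation such that $N\geq N_0$ forces $\eta$, and hence $\widehat\psi$, to be an automorphism. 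This is the only place where genuine work (as opposed to bookkeeping) is needed, and it is precisely why naive $\mathfrak m$-adic closeness of the approximation does not suffice on its own.

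It remains to promote the local $R_0$-homomorphism $\rho\colon\mathcal{O}_{Y,y}\to C$ obtained from $\psi$ to a morphism of schemes. Since $\rho$ is a local homomorphism of rings essentially of finite type over $R_0$ and $\widehat\rho$ is an isomorphism, faithfully flat descent along $C\to\widehat C$ gives flatness of $C$ over $\mathcal{O}_{Y,y}$, and the computation $\widehat{C/\mathfrak m_yC}=\widehat C/\mathfrak m_{\widehat B}\widehat C=\kappa(y)$ shows $C/\mathfrak m_yC=\kappa(y)$, i.e. $\rho$ is unramified with trivial residue extension; thus $\rho$ is \'etale. Spreading $\rho$ out and shrinking $W$ around $w$ --- which does not disturb \'etaleness of $f$ --- yields an $S$-morphism $g\colon(W,w)\to(Y,y)$ that is \'etale near $w$ with $\kappa(w)=\kappa(y)$. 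The resulting diagram $(X,x)\xleftarrow{f}(W,w)\xrightarrow{g}(Y,y)$ exhibits $(W,w)$ as a common Nisnevich neighborhood of $x$ and $y$, which proves the theorem.
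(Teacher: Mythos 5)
Your argument is correct, and it is essentially the proof the paper implicitly relies on: the statement is quoted from the literature (Popescu's general N\'eron desingularization supplying Artin approximation over an arbitrary excellent base, combined with the argument of Artin's Corollary 2.6), and your reduction to a polynomial system, approximation in the henselization, the rigidity step forcing $\widehat\psi$ to be an automorphism, and the flat-plus-unramified check that $\rho$ is \'etale with trivial residue extension reproduce exactly that argument. The only cosmetic remark is that the rigidity step needs no presentation-dependent $N_0$: any local endomorphism of a complete noetherian local ring congruent to the identity modulo $\mathfrak m^2$ is already an automorphism, so $N=2$ suffices.
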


\begin{paragr}
	
\label{num:deg146}
We now recall a construction in \cite[Theorem 1.4.6]{DegliseOri}. Let $S$ be an excellent noetherian scheme, and let
\begin{align}
	X\xleftarrow{}Z\xrightarrow{}Y
\end{align}
be a diagram of $S$-schemes, where both maps are closed immersions. We assume that 
\begin{itemize}
	\item
	For every point $z\in Z$, there exists an $\mathcal{O}_S$-linear isomorphism between the completed local rings 
	\begin{align}
		\widehat{\mathcal{O}}_{X,z}\simeq\widehat{\mathcal{O}}_{Y,z}.
	\end{align}
	\item
	The closed immersion $Z\to Y$ has a retraction $Y\to Z$, so that we can view $Y$ as a scheme over $X$ via the composition $Y\to Z\to X$.
\end{itemize}

\end{paragr}

\begin{paragr}
	
In the setting of~\ref{num:deg146}, we denote by $\Gamma(X\xleftarrow{}Z\xrightarrow{}Y)$ the category of all commutative diagrams of the form
\begin{align}
	\label{eq:deg146a}
	\begin{split}
		\xymatrix@=10pt{
			& & \mathcal{W} \ar_-{f}[ld] \ar^-{g}[rd] & \\
			\Gamma: & \mathcal{X} \ar_-{p}[d] & \mathcal{Z} \ar^-{}[d] \ar_-{}[u] \ar^-{}[l] \ar^-{}[r] & \mathcal{Y} \ar^-{q}[d] \\
			& X  & Z \ar^-{}[l] \ar^-{}[r] & Y
		}
	\end{split}
\end{align}
where
\begin{itemize}
	\item
	$\mathcal{X}$ is a Zariski hypercovering of $X$, and both squares are Cartesian. 
	\item
	$\mathcal{W}$ is a simplicial $S$-scheme such that on each degree $n\geqslant0$, $\mathcal{W}_n$ is a common Nisnevich neighborhood of $\mathcal{Z}_n$ in $\mathcal{X}_n$ and $\mathcal{Y}_n$. 
\end{itemize}
and morphisms in $\Gamma(X\xleftarrow{}Z\xrightarrow{}Y)$ are morphisms of commutative diagrams which are induced by morphisms of Zariski hypercoverings $\mathcal{X}\to\mathcal{X}'$ and morphisms of Nisnevich neighborhoods $\mathcal{W}\to\mathcal{W}'$.

\end{paragr}

\begin{lm}
	
	\label{lem_non_empty_cofiltered_cat}
	The category $\Gamma(X\xleftarrow{}Z\xrightarrow{}Y)$ is non-empty and cofiltered.
\end{lm}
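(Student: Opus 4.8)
The plan is to verify the two assertions — non-emptiness and cofilteredness — separately, with non-emptiness being essentially a bootstrapping argument from the degree-wise case and cofilteredness being a gluing/intersection argument.

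\textbf{Non-emptiness.} First I would construct a single object of $\Gamma(X\xleftarrow{}Z\xrightarrow{}Y)$. The idea is to proceed degreewise along a chosen Zariski hypercovering $\mathcal X \to X$ (for instance a Čech hypercovering attached to an affine open cover, so that each $\mathcal X_n$ is affine, or simply $\mathcal X = X$ in the constant simplicial direction if one does not need affineness). Setting $\mathcal Z = \mathcal X \times_X Z$ and $\mathcal Y = \mathcal X \times_X Y$ — the latter makes sense because $Y$ is viewed as an $X$-scheme via $Y\to Z\to X$ — both squares in \eqref{eq:deg146a} are Cartesian by construction. For each $n$ and each point $z\in \mathcal Z_n$, the completed local rings of $\mathcal X_n$ and $\mathcal Y_n$ at $z$ are base changes of those of $X$ and $Y$ along the same étale (hence ind-smooth) morphism, so the hypothesized $\mathcal O_S$-linear isomorphism $\widehat{\mathcal O}_{X,z}\simeq\widehat{\mathcal O}_{Y,z}$ of \ref{num:deg146} propagates to an isomorphism $\widehat{\mathcal O}_{\mathcal X_n, z}\simeq\widehat{\mathcal O}_{\mathcal Y_n, z}$. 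Theorem~\ref{thm:aaprox} then provides, for each such $z$, a common Nisnevich neighborhood; since $\mathcal Z_n$ is quasi-compact one takes a finite disjoint union of these to get a single common Nisnevich neighborhood $\mathcal W_n$ of $\mathcal Z_n$ in $\mathcal X_n$ and $\mathcal Y_n$. The remaining work — and this is where care is needed — is to make the $\mathcal W_n$ into a simplicial scheme: the face and degeneracy maps of $\mathcal X$ and $\mathcal Y$ must be lifted compatibly to the Nisnevich neighborhoods. This is done by induction on $n$: having built $\mathcal W_{<n}$ compatibly, one forms the relevant matching object and uses the étaleness of $\mathcal W_n \to \mathcal X_n$ together with the existing section over $\mathcal Z_n$ to lift the structure maps uniquely (étale maps have unique lifts against nilpotent — and here Nisnevich-local — thickenings), possibly after shrinking $\mathcal W_n$; uniqueness of such lifts gives the simplicial identities for free.

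\textbf{Cofilteredness.} Recall a category is cofiltered if it is non-empty, every pair of objects admits a common lower bound (an object mapping to both), and every pair of parallel arrows is equalized by some arrow from a further object. Given two objects $\Gamma$, $\Gamma'$ with hypercoverings $\mathcal X$, $\mathcal X'$ and neighborhoods $\mathcal W$, $\mathcal W'$: Zariski hypercoverings of $X$ form a cofiltered (indeed filtered-in-the-dual) system, so choose a common refinement $\mathcal X''\to \mathcal X$, $\mathcal X''\to\mathcal X'$; pull $\mathcal W$, $\mathcal W'$ back to $\mathcal X''$ and set $\mathcal W'' := \mathcal W\times_{\mathcal X}\mathcal X'' \times_{\mathcal X''} \mathcal W'\times_{\mathcal X'}\mathcal X''$ — i.e. the fiber product of the two pulled-back neighborhoods over $\mathcal X''$ in each degree. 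A fiber product of two étale neighborhoods of $\mathcal Z''_n$ is again an étale neighborhood of $\mathcal Z''_n$ (the diagonal section into the product exists because both have sections), so $\mathcal W''$ is again a valid choice, and it maps to both $\Gamma$ and $\Gamma'$; this gives a lower bound. For two parallel morphisms $\Gamma''\rightrightarrows\Gamma$: on the level of Zariski hypercoverings, two maps $\mathcal X''\to\mathcal X$ over $X$ are equalized after a further refinement (standard for hypercoverings); on the level of Nisnevich neighborhoods, two maps $\mathcal W''\to\mathcal W$ of étale neighborhoods of the \emph{same} subscheme that agree on that subscheme already agree, because an étale neighborhood is separated over $X$ and two sections of a separated étale map agreeing on a subscheme agree on the connected components it meets — so after restricting $\mathcal W''$ to the clopen locus where the two agree (which still contains $\mathcal Z''$) the two arrows coincide. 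Hence no further object is even needed for the equalizer condition once the hypercovering part is handled.

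\textbf{Main obstacle.} The genuinely delicate point is the simplicial bookkeeping in the non-emptiness proof: assembling the pointwise common Nisnevich neighborhoods $\mathcal W_n$ into a bona fide simplicial scheme compatible with $\mathcal X$, $\mathcal Y$, and $\mathcal Z$. Everything hinges on the rigidity of étale morphisms — unique lifting of maps along étale morphisms once a compatible behavior over the closed piece $\mathcal Z_n$ is fixed — which both produces the structure maps and forces the simplicial identities. I expect the write-up to carry this out either by an explicit inductive construction over the simplicial degree, or more slickly by invoking that the relevant category of Nisnevich neighborhoods of $\mathcal Z_n$ is itself cofiltered degreewise and that cofiltered limits of such systems interact well with the simplicial structure, reducing the construction to a standard limit argument.
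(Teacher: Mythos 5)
Your overall strategy (pointwise Artin approximation plus assembly) is the paper's, but both halves of your execution have genuine gaps. For non-emptiness, you fix the Zariski hypercovering $\mathcal{X}$ in advance (even allowing the constant one $\mathcal{X}=X$) and then claim that a \emph{single} common Nisnevich neighborhood of all of $\mathcal{Z}_n$ is obtained as a finite disjoint union of the pointwise neighborhoods furnished by Theorem~\ref{thm:aaprox}. This step fails: Artin approximation is a pointwise statement, and a disjoint union of pointed neighborhoods $W_{(z)}$ admits no section over $\mathcal{Z}_n$ (the local sections are only defined near each $z$ and cannot be merged into a map landing in a disjoint union when the corresponding opens of $Z$ overlap); in particular, for $\mathcal{X}=X$ constant the required object simply need not exist. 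Your second, self-identified obstacle — making the degreewise $\mathcal{W}_n$ into a simplicial scheme — is also not resolved by the lifting principle you invoke: $(\mathcal{W}_n,\mathcal{Z}_n)$ is an ordinary closed pair, not a nilpotent or henselian one, so \emph{existence} of lifts of the face and degeneracy maps along the \'etale maps $\mathcal{W}_{n-1}\to\mathcal{X}_{n-1}$ is not guaranteed (only uniqueness on components meeting $\mathcal{Z}_n$ can be salvaged, via separatedness). The paper removes both problems at once with one idea you are missing: spread the pointwise approximation out to Zariski opens $X_z\supset Z_z$, $Y_z$ with a common Nisnevich neighborhood $W_z$ of $Z_z$, and only then build the hypercovering as the \v{C}ech nerve of the covering $\{X_z\}$ together with the $W_z$; the hypercovering is an output of the approximation step, not an input, and the simplicial identities come for free from the \v{C}ech construction.

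For cofilteredness there is a second gap. Your candidate $\mathcal{W}'':=\mathcal{W}\times_{\mathcal{X}}\mathcal{X}''\times_{\mathcal{X}''}\mathcal{W}'\times_{\mathcal{X}'}\mathcal{X}''$ is \'etale over $\mathcal{X}''$ with a section over $\mathcal{Z}''$, but it is not naturally a Nisnevich neighborhood of $\mathcal{Z}''$ in $\mathcal{Y}''$: in an object of $\Gamma(X\leftarrow Z\rightarrow Y)$ the two structure maps $\mathcal{W}\to\mathcal{X}$ and $\mathcal{W}\to\mathcal{Y}$ are only compatible over $\mathcal{Z}$ (the map to $\mathcal{Y}$ is not a morphism of $\mathcal{X}$-schemes), so there is no induced map from your fiber product to $\mathcal{Y}''=\mathcal{Y}\times_{\mathcal{X}}\mathcal{X}''$, and for the same reason the two projections would not both be morphisms of diagrams (the composites to $Y$ through $\mathcal{W}$ and through $\mathcal{W}'$ differ away from $Z$). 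This is precisely why the paper does \emph{not} use fiber products here: it argues Zariski-locally, reducing to $X$ and $Y$ local with common closed point $z$, observes that $\widehat{\mathcal{O}}_{W,z}\simeq\widehat{\mathcal{O}}_{W',z}$, and applies Theorem~\ref{thm:aaprox} a second time to produce a common refinement $W''$ of the two neighborhoods, then assembles as before. Your remark on equalizing parallel arrows via rigidity of (separated) \'etale neighborhoods is reasonable in spirit, but the binary-product step, which is the heart of cofilteredness, needs the Artin approximation input, not a categorical fiber product.
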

\proof
For every point $z\in Z$, applying Theorem~\ref{thm:aaprox} to the pointed local $S$-schemes $(\operatorname{Spec}(\mathcal{O}_{X,z}),z)$ and $(\operatorname{Spec}(\mathcal{O}_{Y,z}),z)$, we see that $z$ has a common Nisnevich neighborhood $W_{(z)}$ in these two schemes; such a property can be lifted to some Zariski neighborhood of $z$, that is, there exists a commutative diagram of $S$-schemes
\begin{align}
	\label{eq:deg1461}
	\begin{split}
		\xymatrix@=10pt{
			& W_z \ar_-{f_z}[ld] \ar^-{g_z}[rd] & \\
			X_z \ar_-{}[d] & Z_z \ar^-{}[d] \ar_-{}[u] \ar^-{}[l] \ar^-{}[r] & Y_z \ar_-{}[d] \\
			X  & Z \ar^-{}[l] \ar^-{}[r] & Y
		}
	\end{split}
\end{align}
where $X_z$, $Y_z$, $Z_z$ are Zariski open neighborhoods of $z$ in $X$, $Y$, $Z$, both squares are Cartesian, and the morphisms $f_z$ and $g_z$ are Nisnevich neighborhoods of $Z_z$. 

The diagram~\eqref{eq:deg1461} for all points $z\in Z$ can be assembled into a commutative diagram of the form~\eqref{eq:deg146a} by taking the \v{C}ech nerve, which gives an element in $\Gamma(X\xleftarrow{}Z\xrightarrow{}Y)$. Therefore the category $\Gamma(X\xleftarrow{}Z\xrightarrow{}Y)$ is non-empty.

It remains to show that this category is cofiltered. Arguing Zariski locally, we may assume that $X$ and $Y$ are local with a common closed point $Z=z$. If $X\xleftarrow{}W\xrightarrow{}Y$ and $X\xleftarrow{}W'\xrightarrow{}Y$ are two common Nisnevich neighborhoods of $z$, then there exists an $\mathcal{O}_S$-linear isomorphism between the completed local rings $\widehat{\mathcal{O}}_{W,z}\simeq\widehat{\mathcal{O}}_{W',z}$, and by Theorem~\ref{thm:aaprox} both $W$ and $W'$ can be refined by a third Nisnevich neighborhood $W\xleftarrow{}W''\xrightarrow{}W'$. Same arguments as above show that the category $\Gamma(X\xleftarrow{}Z\xrightarrow{}Y)$ is cofiltered.
\endproof

\section{Purity and specialization}

\begin{paragr}
	\label{Hyp1_purity}
	\textbf{Hypothesis 1}
	
	Let $X$ be a smooth scheme over $S$, $t:X \to \AA^1_S$ a morphism of $S$-schemes such that $Z:=V(t):=t^{-1}({0})$ is smooth over $S$.

\end{paragr}

\begin{paragr}
	\label{Hyp2_purity}
	\textbf{Hypothesis 2}
	
	Let $X$ be a smooth scheme over $S$, $t:X \to \AA^1_S$ a morphism of $S$-schemes such that $Z:=V(t):=t^{-1}({0})$ is smooth over $S$ and equipped with a retraction $r_X:X\to Z$.

\end{paragr}

\begin{lm}
		
	Let $X$ be a smooth scheme over $S$, $t:X \to \AA^1$ a morphism of $S$-schemes such that $Z:=V(t):=t^{-1}({0})$ is smooth over $S$.
	
	There is a retraction $\rho : \widehat{X}_Z \to Z$ of the induced immersion $Z \to \widehat{X}_Z$.
	\par Denote by $\psi_t : N_Z X \to \AA^1_Z$ the isomorphism of schemes. 
	\par We have induced isomorphisms
	\begin{center}
	$	\xymatrix{
	\widehat{X}_Z
	\ar[r]^{\simeq}
	&
	\widehat{\AA^1_Z}
	\ar[r]^{\simeq}_{\psi_t^{-1}}
	&
	\widehat{\NN_ZX}	
	}$

	\end{center}
of $\AA^1_S$-schemes.
\end{lm}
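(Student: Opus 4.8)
The plan is to obtain the retraction from formal smoothness of $Z/S$, then to identify $\widehat X_Z$ with a completed trivial line bundle over $Z$ by an associated-graded comparison, and finally to transport this identification to $N_ZX$ through $\psi_t$. Throughout, write $\mathcal I\subseteq\mathcal O_X$ for the ideal sheaf of $Z$, so that $\mathcal I=(t)$, and set $X_n=\Spec(\mathcal O_X/\mathcal I^{n+1})$, so that $\widehat X_Z$ has structure sheaf $\varprojlim_n\mathcal O_X/\mathcal I^{n+1}$ on the space $Z$.

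\emph{Construction of $\rho$.} For $n\geq 0$ the closed immersion $X_n\hookrightarrow X_{n+1}$ is a square-zero thickening, since $(\mathcal I^{n+1}/\mathcal I^{n+2})^2=\mathcal I^{2n+2}/\mathcal I^{n+2}=0$. Starting from $\rho_0=\mathrm{id}_Z\colon X_0=Z\to Z$ and using that $Z\to S$ is formally smooth, one lifts $\rho_n$ along $X_n\hookrightarrow X_{n+1}$ to an $S$-morphism $\rho_{n+1}\colon X_{n+1}\to Z$ with $\rho_{n+1}|_{X_n}=\rho_n$; proceeding one square-zero step at a time produces a compatible system whose limit is the required retraction $\rho\colon\widehat X_Z\to Z$.

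\emph{The trivialization and the graded comparison.} Since $X$ and $Z$ are smooth over $S$ and $Z$ is a divisor, $i\colon Z\to X$ is a regular immersion of codimension one cut out by the nonzerodivisor $t$; hence $\mathcal I/\mathcal I^2$ is invertible on $Z$, and being generated by the class $\bar t$ of $t$ it is free with basis $\bar t$. Dualizing gives the trivialization $\psi_t\colon N_ZX\xrightarrow{\sim}\AA^1_Z$, and completing along $Z$ gives $\psi_t^{-1}\colon\widehat{\AA^1_Z}\xrightarrow{\sim}\widehat{N_ZX}$. For the remaining isomorphism, view $\mathcal O_X/\mathcal I^{n+1}$ as an $\mathcal O_Z$-algebra via $\rho_n$ and note that $t^{n+1}\in\mathcal I^{n+1}$, so $(\rho_n,t)$ induces an $\mathcal O_Z$-algebra map $\varphi_n\colon\mathcal O_Z[\tau]/(\tau^{n+1})\to\mathcal O_X/\mathcal I^{n+1}$, $\tau\mapsto\bar t$. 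This map carries the $\tau$-adic filtration to the $\mathcal I$-adic filtration, and on the $k$-th graded piece it is $\mathcal O_Z\cdot\tau^{k}\to\mathcal I^{k}/\mathcal I^{k+1}$, $\tau^{k}\mapsto\bar t^{\,k}$; since $i$ is a regular immersion, $\bigoplus_k\mathcal I^{k}/\mathcal I^{k+1}\cong\operatorname{Sym}_{\mathcal O_Z}(\mathcal I/\mathcal I^2)=\mathcal O_Z[\bar t]$, so each graded map is an isomorphism, hence so is $\varphi_n$ (a filtered morphism of finitely filtered modules that is an isomorphism on $\operatorname{gr}$ is an isomorphism, by the five lemma). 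Passing to the limit over $n$ gives $(\rho,t)\colon\widehat X_Z\xrightarrow{\sim}\widehat{\AA^1_Z}$, and composing with $\psi_t^{-1}$ produces the chain in the statement. All of these are morphisms of $\AA^1_S$-schemes, since the $\AA^1$-coordinate is sent to $t$ at every stage, which is exactly the structural morphism to $\AA^1_S$ induced by $t$.

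\emph{Main obstacle.} The delicate points are (i) assembling the $\rho_n$ into an honest retraction $\rho$ — this forces the inductive square-zero construction, formal-smoothness lifts being non-unique — and (ii) the identification $\operatorname{gr}_{\mathcal I}\mathcal O_X\cong\operatorname{Sym}_{\mathcal O_Z}(\mathcal I/\mathcal I^2)$, which is precisely where the hypotheses enter, through the regularity and codimension one of $i$; everything else is formal bookkeeping with the $\mathcal I$-adic filtration.
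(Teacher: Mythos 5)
Your overall route --- construct the retraction by infinitesimal lifting along the square-zero thickenings $X_n\hookrightarrow X_{n+1}$, then show that $(\rho,t)$ identifies $\widehat{X}_Z$ with $\widehat{\AA^1_Z}$ because it is a filtered morphism inducing an isomorphism on the associated graded $\bigoplus_k\mathcal I^{k}/\mathcal I^{k+1}\cong\mathcal O_Z[\bar t\,]$, and finally transport the identification through $\psi_t$ --- is the natural one, and the second half of your argument (the graded comparison via regularity of $t$, the passage to the limit over $n$, and the compatibility with the $\AA^1_S$-structures through $\tau\mapsto t$) is correct as written.

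The gap is in the very first step. Formal smoothness of $Z/S$ produces a lift of $\rho_n$ along the square-zero extension $X_n\hookrightarrow X_{n+1}$ only when the thickened scheme is affine; for a general smooth $X/S$ the lifts exist Zariski-locally and the obstruction to gluing them is a class in $H^1\bigl(Z,\,T_{Z/S}\otimes\mathcal I^{n+1}/\mathcal I^{n+2}\bigr)\cong H^1(Z,T_{Z/S})$ (the conormal sheaf being trivialized by $t$), which need not vanish. In fact the retraction can already fail at first order: take $S=\Spec(\CC)$ and let $t\colon X\to\AA^1$ be a smooth projective non-isotrivial family of curves of genus at least one, with $Z=t^{-1}(0)$; a retraction $X_1=V(t^2)\to Z$ would split the square-zero extension $0\to\mathcal I/\mathcal I^2\to\mathcal O_{X_1}\to\mathcal O_Z\to 0$ and hence trivialize the first-order deformation, forcing the Kodaira--Spencer class of the family at $0$ to vanish. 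So your inductive construction of $\rho$ (and with it the isomorphism $\widehat{X}_Z\simeq\widehat{\AA^1_Z}$) only goes through after assuming $X$ affine, or after localizing on $X$ --- which is how the statement is actually exploited in the paper, inside Zariski hypercovers and Nisnevich neighbourhoods --- or under some hypothesis killing the $H^1$-obstruction; you should make this restriction explicit, after which the rest of your proof is complete. A minor additional point: you should justify that $t$ is a nonzerodivisor (i.e.\ exclude components of $X$ on which $t$ vanishes identically), since this is what makes $i$ a regular immersion of codimension one and $\psi_t$ an isomorphism.
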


\begin{df}
Let $(X,Z)$ be as in \ref{Hyp1_purity} (resp. $(X,Z,r_X)$ as in \ref{Hyp2_purity}) and put $N:= N_ZX$ (resp. equipped with it canonical projection $r_N:N\to Z$).	We let $\mathscr{I}(X,Z)$ (resp. $\mathscr{I}(X,Z,r_X)$) be the category made of the following data in $\Smsm_{\AA^1_S}$:
	$$
	\xymatrix@=10pt{
		& \cW_\bullet
		\ar_{p_1}[ld]
		\ar^{p_2}[rd] 
		& 
		\\
		\cU_\bullet
		\ar_{q_1}[d] 
		& \cZ_\bullet
		\ar@{^(->}[r]
		\ar@{^(->}[u]
		\ar@{^(->}[d]
		\ar@{_(->}[l] 
		& \cN_\bullet
		\ar^{q_2}[d] 
		\\
		X 
		& Z\ar@{^(->}[r]
		\ar@{_(->}[l] 
		& N
	}
	$$
	where and
	\begin{itemize}
		\item $\cZ_\bullet=Z \times_X \cU_\bullet$, 
		

		\item $q_1$ (and therefore $q_2$) is a Zariski hyper-cover,
		\item for each integer $n \geq 0$, $(\cW_n,\cZ_n) \rightarrow (\cU_n,\cZ_n)$
		and $(\cW_n,\cZ_n) \rightarrow (\cN_n,\cZ_n)$ is an excisive morphism of smooth closed $S$-pairs.
		\item (resp. we have retractions $\cU_\bullet \to \cZ_\bullet$, $\cN_\bullet \to \cZ_\bullet$, $\cU_\bullet \to \cW_\bullet$, $\cN_\bullet \to \cW_{\bullet}$ which are compatible with the previous data).
		
	\end{itemize}
\end{df}

\begin{lm}
Let $(X,Z)$ be as in \ref{Hyp1_purity} (resp. $(X,Z,r_X)$ as in \ref{Hyp2_purity}).
The category $\mathscr{I}(X,Z)$ (resp. $\mathscr{I}(X,Z, r_X)$) is non-empty and cofiltered.
\end{lm}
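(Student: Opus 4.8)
The strategy is to mimic the proof of Lemma~\ref{lem_non_empty_cofiltered_cat}, reducing the statement about the category $\mathscr I(X,Z)$ (resp.\ $\mathscr I(X,Z,r_X)$) to the Artin approximation input of Theorem~\ref{thm:aaprox}. First I would observe that the diagram $X \xleftarrow{} Z \xrightarrow{} N_ZX$ we are dealing with is a special instance of the input data of \ref{num:deg146}: the two maps are closed immersions, $Z\to N_ZX$ is the zero section and hence comes with a canonical retraction $r_N:N_ZX\to Z$, and most importantly the hypothesis on completed local rings is automatically satisfied here. Indeed, by the preceding unnumbered lemma we have $\AA^1_S$-isomorphisms $\widehat{X}_Z \simeq \widehat{\AA^1_Z} \simeq \widehat{N_ZX}$, so for every point $z\in Z$ there is an $\mathcal O_S$-linear (in fact $\AA^1_S$-linear) isomorphism $\widehat{\mathcal O}_{X,z}\simeq \widehat{\mathcal O}_{N_ZX,z}$. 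Thus Lemma~\ref{lem_non_empty_cofiltered_cat} applies to $\Gamma(X\xleftarrow{}Z\xrightarrow{}N_ZX)$ and gives us a non-empty cofiltered category of ``abstract'' common Nisnevich neighborhoods.

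The remaining work is to pass from $\Gamma(X\xleftarrow{}Z\xrightarrow{}N_ZX)$ to $\mathscr I(X,Z)$, i.e.\ to check that the extra conditions imposed in the definition of $\mathscr I$ — that everything lives in $\Smsm_{\AA^1_S}$, that $\mathcal Z_\bullet = Z\times_X\mathcal U_\bullet$, that the maps $q_1,q_2$ are Zariski hypercovers, and that $(\mathcal W_n,\mathcal Z_n)\to(\mathcal U_n,\mathcal Z_n)$ and $(\mathcal W_n,\mathcal Z_n)\to(\mathcal N_n,\mathcal Z_n)$ are excisive morphisms of smooth closed $S$-pairs — can all be arranged, and are preserved under refinement. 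Smoothness over $S$ of the $\mathcal U_n$, $\mathcal N_n$, $\mathcal W_n$ follows because they are étale over the smooth $S$-schemes $X$, $N_ZX$; the structure maps to $\AA^1_S$ come from $t$ (resp.\ from the projection $N_ZX\to\AA^1_Z\to\AA^1_S$) and are compatible under the isomorphisms of the unnumbered lemma, which is exactly why the whole diagram can be taken in $\Smsm_{\AA^1_S}$ rather than merely in $\Sm_S$. The condition $\mathcal Z_\bullet = Z\times_X\mathcal U_\bullet$ and the Cartesianness of the squares is built into the construction via the Čech nerve, as in Lemma~\ref{lem_non_empty_cofiltered_cat}; that the resulting pairs are excisive is precisely the defining property of a Nisnevich neighborhood (étale plus an isomorphism on the closed subscheme). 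Cofilteredness is then inherited: given two objects of $\mathscr I(X,Z)$, Theorem~\ref{thm:aaprox} (applied Zariski-locally around each point of $Z$, after reducing to the local case with $Z=\{z\}$ as in the proof of Lemma~\ref{lem_non_empty_cofiltered_cat}) produces a third common Nisnevich neighborhood refining both, and one checks the morphisms of pairs so obtained are again excisive and smooth over $S$, i.e.\ land in $\mathscr I(X,Z)$.

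For the variant $\mathscr I(X,Z,r_X)$ under Hypothesis~\ref{Hyp2_purity}, the only addition is the system of compatible retractions $\mathcal U_\bullet\to\mathcal Z_\bullet$, $\mathcal N_\bullet\to\mathcal Z_\bullet$, $\mathcal U_\bullet\to\mathcal W_\bullet$, $\mathcal N_\bullet\to\mathcal W_\bullet$. These are produced from the given retraction $r_X:X\to Z$ together with the canonical retraction $r_N:N_ZX\to Z$: shrinking the Nisnevich neighborhoods $\mathcal W_n$ if necessary, one arranges that the retractions lift (this is again an Artin-approximation-type statement, and the étale maps $f,g$ being isomorphisms over $Z$ is what allows the retractions to be compared and refined), exactly as in the retraction bookkeeping of \cite[Theorem~1.4.6]{DegliseOri} that \ref{num:deg146} is extracted from.

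I expect the main obstacle to be purely organizational rather than conceptual: verifying that the excisiveness of the morphisms of smooth closed pairs, the Cartesian/Čech-nerve structure, and (in the second case) the compatible retractions are all simultaneously achievable and stable under the refinements coming from Theorem~\ref{thm:aaprox}, so that the forgetful functor $\mathscr I(X,Z)\to\Gamma(X\xleftarrow{}Z\xrightarrow{}N_ZX)$ is well-defined, essentially surjective up to refinement, and compatible with the cofiltered structure. Once that bookkeeping is in place, non-emptiness and cofilteredness of $\mathscr I(X,Z)$ (resp.\ $\mathscr I(X,Z,r_X)$) follow formally from Lemma~\ref{lem_non_empty_cofiltered_cat}.
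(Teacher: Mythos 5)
Your proposal follows the same route as the paper: the paper's own proof simply observes that $\mathscr{I}(X,Z)$ (resp.\ $\mathscr{I}(X,Z,r_X)$) is a special case of the category $\Gamma(X\xleftarrow{}Z\xrightarrow{}Y)$ with $Y=N_ZX$, so non-emptiness and cofilteredness follow from Lemma~\ref{lem_non_empty_cofiltered_cat}, exactly the reduction you carry out (the completed-local-ring hypothesis coming from $\widehat{X}_Z\simeq\widehat{N_ZX}$ and the retraction from the zero section). Your additional bookkeeping about excisive pairs, the \v{C}ech-nerve structure, and the retraction case is a correct unpacking of what the paper leaves implicit.
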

\begin{proof}
	This is a special case of Lemma \ref{lem_non_empty_cofiltered_cat}.
\end{proof}

\begin{paragr}
	\begin{itemize}		
		\item Denote by $\cC$ the $\infty$-category of pro-objects of simplicial Nisnevich sheaves over $\Smsm_S$.
		\item For $(X,Z)$ as in \ref{Hyp1_purity} (resp. $(X,Z,r_X)$ as in \ref{Hyp2_purity}), we denote by 
\begin{align*}
&\wp(X,Z) = \plim_{\cW_{\bullet} \in \mathscr{I}(X,Z)} \cW_{\bullet}/(\cW_{\bullet} - \cZ_{\bullet}), \\
\text{resp. } & \wp(X,Z,r_X) = \plim_{\cW_{\bullet} \in \mathscr{I}(X,Z,r_X)} \cW_{\bullet}/(\cW_{\bullet} - \cZ_{\bullet}) 
\end{align*}
		considered as an object in $\cC$. 	
\end{itemize}

Fix a diagram $(\cU_\bullet, \cW_\bullet, \cN_\bullet)$ in $\mathscr{I}(X,Z)$ (resp. in $\mathscr{I}(X,Z,r_X)$. We have the following diagram in $\cC$:
\begin{align*}
& \xymatrix{
X/(X-Z)\ar[r]^{(\operatorname{HR})}\ar@{..>}[d]^{\mathfrak{p}_{X,Z}}
 & \cU_\bullet/ (\cU_\bullet - \cZ_{\bullet})\ar@{=}[r]
 & X/(X-Z) \times \cU_\bullet \\
\Th(N_ZX) &	\cN_{\bullet}/ ( \cN_{\bullet} - \cZ_{\bullet})\ar[l]
 & \wp(X,Z)\ar[u]^{p_1}_{\simeq}\ar[l]_{p_2}^{\simeq}
} \\
\text{resp. } & 
\xymatrix{
X/(X-Z)\ar[r]^{(\operatorname{HR})}\ar@{..>}[d]^{\mathfrak{p}_{X,Z,r_X}}
 & \cU_\bullet/ (\cU_\bullet - \cZ_{\bullet})\ar@{=}[r]
 & X/(X-Z) \times \cU_\bullet \\
\Th(N_ZX) &	\cN_{\bullet}/ ( \cN_{\bullet} - \cZ_{\bullet})\ar[l]
 & \wp(X,Z,r_X)\ar[u]^{p_1}_{\simeq}\ar[l]_{p_2}^{\simeq}
}
\end{align*}	
%
%
%

where the hyper-cover map $\operatorname{(HR)}$ is a weak equivalence in the $\infty$-category $\cC$, the maps $p_1$ and $p_2$ are isomorphisms of pro-objects in $\cC$, and ${\mathfrak{p}_{X,Z}}$ (resp. ${\mathfrak{p}_{X,Z, r_X}}$) is the induced map making the diagram commutative. 

\par Denote by $\Delta^{op}\Sh_{Nis}(\Smsm_S)$ the $\infty$-category of simplicial Nisnevich sheaves over $\Smsm_S$. Since the canonical functor from $\Delta^{op}\Sh_{Nis}(\Smsm_S)$ to $\cC$ is fully faithful, we thus obtain the follow theorem which generalizes Morel-Voevodsky purity theorem.

\end{paragr}

\begin{thm}[General purity theorem]	
	\label{thm_general_purity} Let $(X,Z)$ be as in \ref{Hyp1_purity} (resp. $(X,Z,r_X)$), there exists a weak equivalence 
\begin{align*}
\mathfrak{p}_{X,Z}:X/(X-Z) &\simeq \Th(N_ZX) \\
\text{resp. } \mathfrak{p}_{X,Z,r_X}:X/(X-Z) &\simeq \Th(N_ZX)
\end{align*}
	in the $\infty$-category $\Delta^{op}\Sh_{Nis}(\Smsm_S)$.
\end{thm}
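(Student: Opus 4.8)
\emph{Proof strategy.} The plan is to build $\mathfrak{p}_{X,Z}$ as a finite zigzag of weak equivalences in the pro-category $\cC$ joining $X/(X-Z)$ with $\Th(N_ZX)$, and then to transport that equivalence to $\Delta^{op}\Sh_{Nis}(\Smsm_S)$ by exploiting that the canonical functor $\Delta^{op}\Sh_{Nis}(\Smsm_S)\to\cC$ is fully faithful. I spell out only the case of $(X,Z)$ as in \ref{Hyp1_purity}; the case $(X,Z,r_X)$ of \ref{Hyp2_purity} is identical, replacing $\mathscr{I}(X,Z)$ by $\mathscr{I}(X,Z,r_X)$ and carrying the extra retraction data along passively.

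Fix a diagram $(\cU_\bullet,\cW_\bullet,\cN_\bullet)\in\mathscr{I}(X,Z)$, which is possible since that category is non-empty, and form the zigzag in $\cC$
\[
X/(X-Z)\xrightarrow{(\operatorname{HR})}\cU_\bullet/(\cU_\bullet-\cZ_\bullet)\xleftarrow{\ p_1\ }\wp(X,Z)\xrightarrow{\ p_2\ }\cN_\bullet/(\cN_\bullet-\cZ_\bullet)\longrightarrow\Th(N_ZX).
\]
The arrow $(\operatorname{HR})$ is a weak equivalence by hypercover descent: $q_1\colon\cU_\bullet\to X$ is a Zariski hypercover and, since $\cZ_\bullet=Z\times_X\cU_\bullet$, its base change $\cU_\bullet-\cZ_\bullet\to X-Z$ along the open immersion $X-Z\hookrightarrow X$ is again a Zariski hypercover; as the formation of the quotient commutes with geometric realization, the realization of the degreewise quotients of the hypercover recovers $X/(X-Z)$. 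The rightmost arrow is a weak equivalence for the same reason, applied to the Zariski hypercover $q_2\colon\cN_\bullet\to N_ZX$ with its induced closed subscheme $\cZ_\bullet$.

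The crux is that $p_1$ and $p_2$, the projections from $\wp(X,Z)=\plim_{\cW_\bullet\in\mathscr{I}(X,Z)}\cW_\bullet/(\cW_\bullet-\cZ_\bullet)$ onto two of its legs, are isomorphisms of pro-objects; equivalently, the pro-system $\{\cW_\bullet/(\cW_\bullet-\cZ_\bullet)\}_{\mathscr{I}(X,Z)}$ is essentially constant along these maps. This follows from Nisnevich excision, one simplicial degree at a time: by construction each $(\cW_n,\cZ_n)\to(\cU_n,\cZ_n)$ and $(\cW_n,\cZ_n)\to(\cN_n,\cZ_n)$ is an excisive morphism of smooth closed $S$-pairs, so the associated square of smooth $S$-schemes is an elementary Nisnevich square, hence a pushout in $\Sh_{Nis}(\Smsm_S)$; pasting pushouts then gives that $\cW_n/(\cW_n-\cZ_n)\to\cU_n/(\cU_n-\cZ_n)$ and $\cW_n/(\cW_n-\cZ_n)\to\cN_n/(\cN_n-\cZ_n)$ are equivalences, and the same persists after geometric realization. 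Composing the zigzag produces $\mathfrak{p}_{X,Z}$ and exhibits it as a weak equivalence in $\cC$. Now $X/(X-Z)$ and $\Th(N_ZX)$ are constant pro-objects, i.e.\ they lie in the essential image of the fully faithful embedding $\Delta^{op}\Sh_{Nis}(\Smsm_S)\hookrightarrow\cC$; full faithfulness identifies the corresponding mapping spaces, so $\mathfrak{p}_{X,Z}$ is the image of an essentially unique morphism in $\Delta^{op}\Sh_{Nis}(\Smsm_S)$, and since a fully faithful functor reflects equivalences, that morphism is the desired weak equivalence.

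The step I expect to be the main obstacle is the excision input used for $p_1,p_2$: one must know that an excisive morphism of smooth closed $S$-pairs still furnishes an elementary Nisnevich square after restricting to the smaller site $\Smsm_S$ (so that the distinguished Nisnevich squares remain available when only smooth morphisms are permitted), and that the resulting equivalence is preserved by geometric realization; this is exactly the place where one sees that no $\AA^1$-invariance is needed, the argument being about the Nisnevich topology alone. A secondary point requiring care is that every scheme and morphism in a diagram of $\mathscr{I}(X,Z)$ genuinely lies in $\Smsm_S$: the $\cU_n$ are \'etale over $X$ and the $\cN_n$ \'etale over the vector bundle $N_ZX$, both of which are smooth over $S$ since $X$ and $Z$ are, while the transition maps $\cW_n\to\cU_n$ and $\cW_n\to\cN_n$ are \'etale, hence smooth; the very existence of such diagrams rests on the formal-local identification $\widehat{X}_Z\simeq\widehat{N_ZX}$ of the preceding lemma combined with Artin approximation, as in \Cref{thm:aaprox} and \Cref{lem_non_empty_cofiltered_cat}.
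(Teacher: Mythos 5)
Your proposal is correct and takes essentially the same route as the paper: the same zigzag through $\cU_\bullet/(\cU_\bullet-\cZ_\bullet)$, $\wp(X,Z)$ and $\cN_\bullet/(\cN_\bullet-\cZ_\bullet)$, with the map $(\operatorname{HR})$ handled by Zariski hypercover descent, $p_1$ and $p_2$ identified as pro-isomorphisms via degreewise Nisnevich excision for the excisive morphisms built into $\mathscr{I}(X,Z)$ (resp.\ $\mathscr{I}(X,Z,r_X)$), and the passage to $\Delta^{op}\Sh_{Nis}(\Smsm_S)$ by full faithfulness of the constant pro-object embedding. You in fact spell out justifications (excision on the restricted site $\Smsm_S$, descent, reflection of equivalences) that the paper leaves implicit, but the construction and its key inputs are the same.
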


\begin{rem}
\begin{enumerate}
	\item More generally, the map $\mathfrak{p}_{X,Z}$ exists for any $i:Z \to X$, regular closed immersion of smooth schemes over $S$.
	\item 	We emphasize the fact no $\AA^1$-homotopy invariance property has been used so far.
	\item In another work, we hope to extend this theorem to non-smooth scheme $X/S$.
\end{enumerate}
\end{rem}

\begin{df}
	
	\label{def_specialization}
	Let $(X,Z)$ as in \ref{Hyp1_purity} (resp. $(X,Z,r_X)$ as in \ref{Hyp2_purity}). We define a specialization map $\spz_{X,Z}$ (resp. $\spz_{X,Z,r_X}$) as the unique map making the following diagram commutative
\begin{align*}
&\xymatrix{
	\Th(\AA^1_Z)
	\ar[r]^{\psi_t^{-1}}
	\ar@{..>}[d]^{\spz_{X,Z}}
		&
		\Th (N_ZX)
		&
		X/(X-Z)
		\ar[l]_{\simeq}^{\mathfrak{p}_{X,Z}}
		\ar[d]
		\\
		\Th(\AA^1_Z)\wedge (X-Z)_{+}
		&
		S^1 \wedge (\Gm \wedge (X-Z) )_{+}
		\ar[l]^{\simeq}
		&
		S^1 \wedge (X-Z)_{+}
		\ar[l]_-{\gamma_t}	
	} \\
\text{resp. } &
\xymatrix{
		\Th(\AA^1_Z)
		\ar[r]^{\psi_t^{-1}}
		\ar@{..>}[d]^{\spz_{X,Z,r_X}}
		&
		\Th (N_ZX)
		&
		X/(X-Z)
		\ar[l]_{\simeq}^{\mathfrak{p}_{X,Z,r_X}}
		\ar[d]
		\\
		\Th(\AA^1_Z)\wedge (X-Z)_{+}
		&
		S^1 \wedge (\Gm \wedge (X-Z) )_{+}
		\ar[l]^{\simeq}
		&
		S^1 \wedge (X-Z)_{+}
		\ar[l]_-{\gamma_t}	
	}
\end{align*}
\end{df}

\begin{paragr}
	Denote by $\cH$ the set of pullbacks of hyper-coverings and by $\Span_{\cH}(\cC)$ the category whose objects are the objects of $\cC$ and whose maps are spans
	\begin{center}
		$
		\xymatrix{
	{}
	&
	R
	\ar[dr]^q
	\ar[dl]_p
	&
	{}
	\\
	P
	&
	{}
	&
	Q	
	}$

	\end{center}

where $p \in \cH$.
\end{paragr}

\begin{paragr}
	\label{paragr_house_of_spans}
	We have to following commutative diagram of $\infty$-categories
	
	\begin{center}
		$
		\xymatrix{
			{}
			&
			\Span_{H}(\cC)
			\ar[rd]^{L_{\textbf{HR}, \AA^1 }}
			\ar[ld]_{L_{\textbf{HR}}}
			&
			{}
			\\
			L_{\textbf{HR}}\Span_{H}(\cC)
			\ar[rr]^{L_{\AA^1 }}
			&
			{}
			&
			L_{\textbf{HR}, \AA^1 }\Span_{H}(\cC)
			\\
			L_{\textbf{HR}}\cC 
			\ar[u]^{\simeq}
			\ar[rr]
			&
			{}
			&
			L_{\textbf{HR}, \AA^1 }\cC
			\ar[u]^{\simeq}
		}$
		
	\end{center}
	
	where $L_{\textbf{HR}}$ (resp. ${L_{\AA^1 }}$) denotes the localization with respect to the Zariski hyper-coverings (resp. the projection maps $\AA^1_X \to X$) and the vertical maps are the canonical inclusions.

\end{paragr}

\begin{rem}
	The maps $\mathfrak{p}_{X,Z}$ and $\spz_{X,Z}$ are in $\Span_{\cH}(\cC)$.
\end{rem}

\begin{df}
	\label{def_cat_smsp}
	We define, and denote by $\Smsp_S$ (resp. $\Smspr$), the free category whose objects are the smooth $S$-schemes, and whose set of morphisms is generated by:
	\begin{itemize}
		\item [\namedlabel{itm:sp1}{(Dsp1)}] a symbol $[f]:Y \to X$ for each smooth morphisms of smooth $S$-schemes $f:Y\to X$,
		\item [\namedlabel{itm:sp2}{(Dsp2)}] a symbol $[\spz_{X,Z}]:Z \to X-Z$ (resp. $[\spz_{X,Z,r_X}]:Z \to X-Z$) for couple $(X,Z)$ as in \ref{Hyp1_purity} (resp. triple $(X,Z,r_X)$ as in \ref{Hyp2_purity}),
			\end{itemize}
	modulo the relations:
	\begin{itemize}
		\item [\namedlabel{itm:Rsp1}{(Rsp1)}] For any $f,g$ composable smooth morphisms of smooth $S$-schemes, we have 
		\begin{center}
			$[f \circ g]=[f] \circ [g]$,
		\end{center}
		and, for any smooth $S$-scheme $X$, we have $[\Id_X]=\Id_X$.
		\item [\namedlabel{itm:Rsp2}{(Rsp2)}] Let $(X,Z)$ as in \ref{Hyp1_purity} (resp. $(X,Z,r_X)$ as in \ref{Hyp2_purity}), and let $f:Y\to X$ be a smooth morphism of smooth $S$-schemes. Denote by $t'=t\circ f$ (hence $T:=V(t')$ is smooth over $S$), and set $g:T \to Z$ and $h:Y-T \to X-Z$ the induced maps. (resp. Denote by $r_Y$ the pullback of $r_X$ along $f$). We have the relation
		\begin{align*}
			[\spz_{X,Z}] \circ [g] &= [h] \circ [\spz_{Y,T}] \\
			\text{resp. } [\spz_{X,Z,r_X}] \circ [g] &= [h] \circ [\spz_{Y,T, r_Y}].
		\end{align*}
	\item [\namedlabel{itm:Rsp3}{(Rsp3)}] For $(X,Z,r_X)$ as in \ref{Hyp2_purity}. Let $r_{X-Z}:(X-Z) \rightarrow Z$ be the restriction of $r_X$.	We also think of $Z$ as a closed subscheme of $\AA^1_Z$ and consider the canonical projection $r_{\AA^1_Z}:\AA^1_Z \to Z$ and its restriction $r_{{\Gm}_Z}:{\Gm}_Z \to Z$. 
			We add the relation:
			\begin{align*}
				[r_{X-Z}] \circ [\spz_{X,Z}] & = [r_{\AA^1_Z}] \circ [\spz_{\AA^1_Z,Z}] \\
				\text{resp. } [r_{X-Z}] \circ [\spz_{X,Z,r_X}] & = [r_{\AA^1_Z}] \circ [\spz_{\AA^1_Z,Z,r_{{\Gm}_Z}}]
			\end{align*}
\end{itemize}	
\end{df}

\begin{rem}\label{rem:specialisations}
	In Relation \ref{itm:sp2}, the divisor $Z$ can be empty.
	Moreover, we assume that the data of the pair $(X,Z)$ (resp. $(X,Z,r)$) is part of the given morphism $[\spz_{X,Z}]$ (resp. $[\spz_{X,Z,r}]$).
\end{rem}

\begin{rem}
	An immediate consequence of Definition \ref{def_cat_smsp} is the existence of a canonical functor
	\begin{center}
		${\iota^{sp}_{spr}} : \Smspr_S \to \Smsp_S$	\end{center}
	that forgets the retraction.
\end{rem}

\begin{paragr}
	
	\label{paragr_smsp_to_spans}
	We define a canonical functor 
	\begin{align*}
		& \Theta_{sp}: \Smsp_S \to L_{\textbf{HR},\AA^1}\Span_{\cH}(\cC) \\
		\text{resp. } & \Theta_{spr}: \Smspr_S \to L_{\textbf{HR},\AA^1}\Span_{\cH}(\cC)
	\end{align*}
	as follows:
	\begin{enumerate}
	\item[\ref{itm:sp1}] A symbol $[f]:Y \to X$ (coming from a smooth morphisms of smooth $S$-schemes $f:Y\to X$) is send to (the trivial span defined by) $f$,
		\item[\ref{itm:sp2}] A symbol $[\spz_{X,Z}]:Z \to X-Z$ (for $(X,Z)$ as in \ref{Hyp1_purity}) is send to the specialization map $\spz_{X,Z}$ defined in \ref{def_specialization}.
	\\ 	(resp.
		A symbol $[\spz_{X,Z,r_X}]:Z \to X-Z$ (for $(X,Z,r_X)$ as in \ref{Hyp2_purity}) is send to the specialization map $\spz_{X,Z,r_X}$ defined in \ref{def_specialization}.
		)

		\item[\ref{itm:Rsp1}] This relation is satisfied trivially.

		\item[\ref{itm:Rsp2}]
		Let $(X,Z)$ as in \ref{Hyp1_purity}, and let $f:Y\to X$ be a smooth morphism of smooth $S$-schemes. Denote by $t'=t\circ f$ (hence $T:=V(t')$ is smooth over $S$), and set $g:T\to  Z$ and $h:Y-T \to X-Z$ the induced maps. In order to prove that
		\begin{center}
			$\spz_{X,Z} \circ g = h \circ \spz_{Y,T}$		
		\end{center}
in $L_{\textbf{HR},\AA^1}\Span_{\cH}(\cC)$, it suffices to work in $\Span_{\cH}(\cC)$.
	Consider an object in $\mathscr{I}(X,Z)$ given by the diagram
		$$
	\xymatrix@=10pt{
		& \cW_\bullet
		\ar_{p_1}[ld]
		\ar^{p_2}[rd] 
		& 
		\\
		\cU_\bullet
		\ar_{q_1}[d] 
		& \cZ_\bullet
		\ar@{^(->}[r]
		\ar@{^(->}[u]
		\ar@{^(->}[d]
		\ar@{_(->}[l] 
		& \cN_\bullet
		\ar^{q_2}[d] 
		\\
		X 
		& Z\ar@{^(->}[r]
		\ar@{_(->}[l] 
		& N
	}
	$$
	and consider its pullback along $Y\to X$ which is an object in $\mathscr{I}(Y,T)$ denoted by 
		$$
	\xymatrix@=10pt{
		& \cW'_\bullet
		\ar_{p_1'}[ld]
		\ar^{p_2'}[rd] 
		& 
		\\
		\cU'_\bullet
		\ar_{q_1'}[d] 
		& \cZ'_\bullet
		\ar@{^(->}[r]
		\ar@{^(->}[u]
		\ar@{^(->}[d]
		\ar@{_(->}[l] 
		& \cN'_\bullet
		\ar^{q_2'}[d] 
		\\
		Y 
		& T\ar@{^(->}[r]
		\ar@{_(->}[l] 
		& N'
	}
	$$
	where, by assumptions, we have $T\simeq Y \times_X Z$ and 
	$$
	N'\simeq  N_TY \simeq N_Z X \times_Z T \simeq N_Z X \times_X Y.
	$$
	
The following diagram
\begin{center}
	$\xymatrix{
	(Y,T)
	\ar[r]
	\ar[d]
	&
	(X,Z)
	\ar[d]
	\\
	(\cU'_\bullet, \cZ'_\bullet)
	\ar[r]
	\ar[d]
	&
	(\cU_\bullet, \cZ_\bullet)
	\ar[d]
	\\
	(\cW'_\bullet, \cZ'_\bullet)
	\ar[r]
	\ar[d]
	&
	(\cW_\bullet,\cZ_\bullet)
	\ar[d]
	\\
	(\cN'_\bullet, \cZ'_\bullet)
	\ar[r]
	\ar[d]
	&
	(\cN_\bullet,  \cZ'_\bullet)
	\ar[d]
	\\
	(N',T)
	\ar[r]
	&
	(N,Z)
	}$

\end{center}
is commutative.

Thus, the following diagram
\begin{center}
	$\xymatrix{
Y/(Y-T)
\ar[r]
&
X/(X-Z)
\\
\cW'_\bullet/(\cW'_\bullet - \cZ'_\bullet)
\ar[u]
\ar[r]
\ar[d]
&
\cW_\bullet/(\cW_\bullet - \cZ_\bullet)
\ar[u]
\ar[d]
\\
\Th(N')
\ar[r]
&
\Th(N)
	}
$

\end{center}

is also commutative, and we can define the following composite map 
\begin{center}
$
\xymatrix{
\plim_{\cW'_{\bullet} \in \mathscr{I}(Y,T)}
\cW'_\bullet/(\cW'_\bullet - \cZ'_\bullet)
\ar[d]
\ar@/^10pc/[dd]^{\alpha_f}
\\
\plim_{\cW'_{\bullet} \in \mathscr{I}(Y,T)}
\Phi(\cW_\bullet)/(\Phi(\cW_\bullet) - \Phi(\cZ_\bullet))
\ar[d]
\\
\plim_{\cW_{\bullet} \in \mathscr{I}(X,Z)}
	\cW_\bullet/(\cW_\bullet - \cZ_\bullet)
}
$
\end{center}
where we have denoted by
$$
\Phi: \mathscr{I}(X,Z)
\to
\mathscr{I}(Y,T)
$$
the canonical functor induced by pullbacks along $Y\to X$.

According to Definition \ref{def_specialization}, the following diagram
\begin{center}
	$
	\xymatrix{
	\Th(\AA^1_T)
	\ar[r]
	\ar[d]
	\ar@/_9pc/[ddddd]^{\spz_{Y,T}}
	&
	\Th(\AA^1_Z)
	\ar@/^9pc/[ddddd]_{\spz_{X,Z}}
	\ar[d]
	\\
	\Th(N_TY)
	\ar[r]
	&
	\Th(N_ZX)
	\\
	Y/(Y-Z')
	\ar[r]
	\ar[d]
	\ar[u]
	&
	X/(X-Z)
	\ar[d]
	\ar[u]
	\\
	S^1 \wedge (Y-T)_{+}
	\ar[d]^{\gamma_{t'}}
	\ar[r]
	&
	S^1 \wedge (X-Z)_{+}
	\ar[d]^{\gamma_{t}}
	\\
		S^1 \wedge (\Gm \wedge (Y-T) )_{+}
		\ar[r]
		\ar[d]^{\simeq}
	&
	S^1 \wedge (\Gm \wedge (X-Z) )_{+}
	\ar[d]^{\simeq}
	\\
	\Th(\AA^1_T)
	\wedge (Y-T)_{+}
	\ar[r]
	&
		\Th(\AA^1_Z)
	\wedge (X-Z)_{+}
}$
\end{center}
is commutative where the horizontal maps are induced by $f$ (and $\alpha_f$).

Putting things together, we conclude that:
	\begin{center}
	$\spz_{X,Z} \circ g = h \circ \spz_{Y,T}$.			
\end{center}	
Respectively for $\Smspr_S$, we can prove verbatim that 
	\begin{center}
	$\spz_{X,Z,r_X} \circ g = h \circ \spz_{Y,T,r_Y}$			
\end{center}
where we use the notations of \ref{itm:Rsp2}.
		
\item[\ref{itm:Rsp3}] For this relation, we need the $\AA^1$-localization property.
We start by working on the case $\Smspr_S$, that is we prove:
		\begin{center}
	$[r_{X-Z}] \circ [\spz_{X,Z,r_X}] = [r_{\AA^1_Z}] \circ [\spz_{\AA^1_Z,Z,r_{{\Gm}_Z}}]$
\end{center}
where we use the notations of \ref{itm:Rsp3}.
This relation follows from the fact that the diagram

\begin{center}	
	\resizebox{!}{10pt}{
	$
	\xymatrix{
		X/(X-Z)
		\ar[d]
		&
		\wp(X,Z,r_X)
		\ar[l]
		\ar[d]
		\ar[r]
		&
		\Th(N_ZX)
		\ar[d]
		\ar[r]
		&
		\Th(\AA^1_Z)
		\ar[d]
		\\
		S^1
		\wedge 
		(X-Z)_{+}
		\ar[d]^-{\gamma_{t}}
		&
	\plim_{\cW_{\bullet} \in \mathscr{I}(X,Z,r_X)}	S^1
		\wedge
		(X-\cW_{\bullet})
		\ar[r]
		\ar[d]^-{\gamma_{t}}
		\ar[l]
		&
		S^1
		\wedge
		(N_ZX - Z)_{+}
		\ar[r]
		\ar[d]^-{\gamma_{t}}
		&
		S^1
		\wedge
		{\Gm}_{Z,+}
		\ar[d]^-{\gamma_{t}}
		\\
		S^1
		\wedge
		\Gm 
		\wedge
		(X-Z)_{+}
		\ar[d]^{r_{X-Z}}
				&
	\plim_{\cW_{\bullet} \in \mathscr{I}(X,Z,r_X)}	S^1
	\wedge
	\Gm 
	\wedge
	(\cW_{\bullet}-Z)_{+}
	\ar[d]
	\ar[l]
	\ar[r]
	&
		S^1
	\wedge
	\Gm 
	\wedge
	(N_ZX-Z)_{+}
	\ar[d]^{r_{N_ZX-Z}}
	\ar[r]	
	&
		S^1
	\wedge
	\Gm 
	\wedge
	{\Gm}_{Z,+}
	\ar[d]
	\\
		S^1
	\wedge
	\Gm 
	\wedge
	Z_{+}
	\ar@{=}[r]
	&
		S^1
	\wedge
	\Gm 
	\wedge
	Z_{+}
	\ar@{=}[r]
	&
		S^1
	\wedge
	\Gm 
	\wedge
	Z_{+}
	\ar@{=}[r]
	&
		S^1
	\wedge
	\Gm 
	\wedge
	Z_{+}
	}
$
}
\end{center}

	is commutative. 
	\par We now work on the case $\Smsp_S$. In order to prove that 
		\begin{center}
		$[r_{X-Z}] \circ [\spz_{X,Z}] = [r_{\AA^1_Z}] \circ [\spz_{\AA^1_Z,Z}]$,
	\end{center}
we can draw a similar diagram as above but one has to be careful of the fact that the middle-bottom square may not commute in this case. The solution is to reduce to the previous case by considering the commutative diagram:
\begin{center}
	$
	\xymatrix{
			X/(X-Z)
		\ar[d]^{\simeq}
		&
		\wp(X,Z,r_X)
		\ar[l]
		\ar[d]^{\simeq}
		\ar[r]
		&
		\Th(N_ZX)
		\ar[d]^{\simeq}
		\ar[r]
		&
		\Th(\AA^1_Z)
		\ar[d]^{\simeq}
		\\
			X/(X-Z)
		&
		\wp(X,Z)
		\ar[l]
		\ar[r]
		&
		\Th(N_ZX)
		\ar[r]
		&
		\Th(\AA^1_Z)
					}
$

\end{center}
where the vertical maps are induced by the fact the canonical functor $\mathscr{I}(X,Z,r_X) \to \mathscr{I}(X,Z)$ is fully faithful, and thus are isomorphisms thanks to the $\AA^1$-localization property.
\end{enumerate}
\end{paragr}

\section{Moving lemma via the smooth-smooth site}

\begin{paragr}
	\label{paragr_magic_functor}
	Putting \ref{paragr_house_of_spans} and \ref{paragr_smsp_to_spans} together, we obtain an $\infty$-functor
	\begin{center}
		$\mathscr{M}: \Smsp_S \to \SH(\Smsm_S)
		$
	\end{center}
	where  $\SH(\Smsm_S)$ is the stable motivic homotopy $\infty$-category defined over the smooth-smooth site $\Smsm_S$.
\end{paragr}

\begin{paragr}
	We may now prove our main Theorem \ref{thm_main_moving}. We are going to construct a quasi-inverse to the functor
	$$
	\nu^*:\SH(\Sm_S) \rightarrow \SH(\Smsm_S)
	$$
	by choosing models. Let $\E \in \SH(\Sm_S)$ be a motivic spectrum which can be seen as a family of functors 
	$$
	\E_n: (\Smsm_S)^{op} \to \mathscr{S}
	$$
	where $\mathscr{S}$ is the $\infty$-category of spaces.
	\par We can extend $\E_n$ to a functor
	$$
	\widehat{\E}_n: \SH(\Smsm_S) \to \mathscr{S}
	$$
	
	by taking colimits and compose this with 
	\begin{center}
		$\mathscr{M}: \Smsp_S \to \SH(\Smsm_S)
		$
	\end{center}
	in order to obtain an $\infty$-functor
	\begin{center}
		$\widehat{\E}_n: \Smsp_S \to \mathscr{S}
		$
	\end{center}
	for each natural number $n$.
	\par Take a model still denoted by $\widehat{\E}_*$ and rigidify it by setting
	$$
	\tilde{\E}_*(X)
	=
	\lim_{r\in \NN}
	\widehat{\E}(\AA^r_S\times_S X)
	$$
	for any smooth scheme $X/S$. Exactly as in \cite[Section 2.2]{DegliseFeldJin22}, we can prove that $\tilde{\E}_*$ can be equipped with Gysin morphisms for local complete intersection maps. Since we can work with affine schemes without loss of generality, this implies that $\tilde{\E}_*$ can be extended to the whole site $\Sm_S$. Denote by $\Psi(\E_*) \in \SH(\Sm_S)$ this new object. By construction, the restriction of $\Psi(\E_*)$ to the smooth-smooth site $\Smsm_S$ is exactly $\E_*$.
	
	\par Conversely, if $\E'$ is an element in $\SH(\Sm_S)$, then we can consider its restriction $\nu^*(\E')$ to the smooth-smooth site. In this case, we can see that, for $(X,Z)$ as in \ref{Hyp1_purity}, the purity morphism
	$$
	\mathfrak{p}_{X,Z}^*:
	\nu^*(\E')(\Th(N_ZX))
	\to 
	\nu^*(\E')(X/(X-Z))
	$$
	defined in \ref{thm_general_purity} coincides with Morel-Voevodsky purity morphism. Hence, the pullbacks constructed on $\Psi(\nu^*(\E'))$ correspond to the pullbacks on $\E'$, thus
	$$
	\Psi(\nu^*(\E'))
	\simeq
	\E'
	$$
	which concludes the proof of Theorem \ref{thm_main_moving}.
\end{paragr}

%
%
%

\bibliographystyle{amsalpha}
\bibliography{MW}

\end{document}